\theoremstyle{plain}
\newtheorem{thm}{\protect\theoremname}
\newtheorem{lem}{\protect\lemmaname}
  \theoremstyle{definition}
  \newtheorem{defn}{\protect\definitionname}
  \theoremstyle{plain}
  \newtheorem{assumption}{\protect\assumptionname}
  \theoremstyle{plain}
  \newtheorem{prop}[thm]{\protect\propositionname}
  \theoremstyle{remark}
  \newtheorem{rem}{\protect\remarkname}
  \providecommand{\assumptionname}{Assumption}
  \providecommand{\definitionname}{Definition}
  \providecommand{\propositionname}{Proposition}
  \providecommand{\remarkname}{Remark}
 \providecommand{\theoremname}{Theorem}
  \providecommand{\lemmaname}{Lemma}
  \newcommand{\A}{\mathbb A}
\newcommand{\E}{\mathbb E}
\newcommand{\D}{\mathbb D}
 \newcommand{\R}{\mathbb R}
 \newcommand{\cE}{\mathcal E}
 \newcommand{\cF}{\mathcal F}
\newcommand{\cH}{\mathcal H}
  \newcommand{\cL}{\mathcal L}
\newcommand{\cP}{\mathcal P}
\newcommand{\cS}{\mathcal S}
 \newcommand{\af}{\alpha}
\newcommand{\ga}{\gamma}
 \newcommand{\de}{\delta}
 \newcommand{\om}{\omega}
 \newcommand{\Om}{\Omega}
 \newcommand{\Te}{\Theta}
 \newcommand{\si}{\sigma}
 \newcommand{\mL}{\mathbb L}
\begin{document}

\title[Regularity of weak formulation of MFG]
{On the Regularity of a Weak Formulation of Stochastic Differential Mean-Field Games}

\author[H. S\'anchez Morgado]{H\'ector S\'anchez Morgado}
\email{hector@matem.unam.mx}
\address{Instituto de Matem\'aticas, Universidad Nacional Aut\'onoma de M\'exico,
  Ciudad Universitaria CP 04510, Ciudad de M\'exico, M\'exico.}
\author[J. Sierra]{Jes\'us Sierra}
\email{jesus.sierra@cimat.mx}
\address{Instituto de Matem\'aticas, Universidad Nacional Aut\'onoma de M\'exico,
  Ciudad Universitaria CP 04510, Ciudad de M\'exico, M\'exico.}
\curraddr{CIMAT, De Jalisco s/n, Gto., 36023, Guanajuato, Mexico.}
%\cortext[cor1]{Corresponding author}

\begin{abstract}
We study a McKean-Vlasov Forward-Backward Stochastic Differential Equation (FBSDE) in connection with the theory of Stochastic Differential Mean-Field games, particularly the weak (non-fully coupled) formulation described in Section 3.3.1 of \cite{carmona2018probabilistic}. Our main goal is to obtain regularity results for this McKean-Vlasov FBSDE, specifically classical and Malliavin differentiability.
\end{abstract}
\thanks{J. Sierra was supported by CONACYT through its program Estancias Posdoctorales por Mexico}

\keywords{McKean-Vlasov FBSDE, Mean Field Games, Malliavin Calculus, Regularity} 

\subjclass{49N80, 91A15, 35Q89}

\maketitle
\section{Introduction}
\label{sec:intro}

\subsection{Mean-field games}
\label{sec:mfg}

A mean-field game (MFG) is a system that models interacting agents
in large populations \cite{cardaliaguet2010notes}. The agents
are rational and seek to optimize a value function by selecting appropriate
controls as follows:
\begin{equation}
\inf_{\alpha\in\mathbb{A}}J(t,x;\alpha)\hbox{ with }J(t,x;\alpha)=\E\left[\int_t^TL(X^{t,x}_s,\alpha_s,\cL(X^{t,x}_s))ds+g(X^{t,x}_T,\cL(X^{t,x}_T))\right],\label{eq:MFG1}
\end{equation}
where $X^{t,x}_s$ is the solution of
\begin{equation}
\begin{cases}
dX_s&=\si(X_s)\alpha_sds+\si(X_s)\circ dW_s,\ s\in[t,T],\\
X_t&=x\in\R^d,
\end{cases}\label{eq:MFG2a}
\end{equation}
and $\circ$ represents Stratonovich integration.
The dynamics of the private state $X_s$ of a representative agent
is given by \eqref{eq:MFG2a}. We consider a complete probability space $\left(\Omega,\mathcal{F},P\right)$
endowed with an $m-$dimensional Brownian motion $W=\left(W_{t}\right)_{t\in[0,T]}$
and its filtration $\mathbb{F}=\left(\mathcal{F}_{t}\right)_{t\in\left[0,T\right]}$
augmented by all $P-$null sets and a ``rich enough'' sub $\sigma-$algebra
$\mathcal{F}_{0}$ independent of $W$ (see Section \ref{sec:diff-meas}). Furthermore, $\si(x)=\left[\si_1(x),\ldots,\si_m(x)\right]\in\R^{d\times m}$,
and $\si_1,\ldots,\si_m$
are $C^{2,\alpha}_b$ vector fields, $\alpha> 0$, i.e., their second
derivatives have bounded $\alpha$-H\"older norm;
this will allow us to ensure later that the Stratonovich to It\^o
integral correction belongs to $C^{1,\alpha}_b$. 
We will denote $X^{0,x}_s$ by $X^x_s$.

In \eqref{eq:MFG1}--\eqref{eq:MFG2a}, the agent manages its state
by choosing a control $\alpha\in\mathbb{A}$; $\alpha_s$ is a progressively
measurable $\R^m-$valued stochastic process satisfying
the admissibility condition 
\begin{equation}
\E\left[\int_t^T\left|\alpha_r\right|^2dr\right]<\infty\label{eq:sq_int}
\end{equation}
The agent chooses a control driven by the desire of minimizing an
expected cost, $J(t,x;\alpha)$, over a period $[t,T]$.
This expected cost is a combination of a running cost, $L:\R^d\times\R^m\times\cP_2(\R^d)\to\R$,
and a terminal cost, $g:\R^d\times\cP_2(\R^d)\to\R$, where
$\cP_2(\R^d)$ is the space of Borel probability
measures on $\R^d$ with finite second moments and $\cL(X_s)$
stands for the law of $X_s$. Both $L$ and $g$ include the interactions
between the agent and the mean field represented by $\cL(X_s)$.

The fact that the statistical distribution of the agents has to be
given by $\cL(X_s)$ indicates that we are searching
for an equilibrium in the sense of Nash (see, e.g., \cite{carmona2018probabilistic}).

In what follows, we consider the SDE \eqref{eq:MFG2a} in terms of It\^{o} integration, that is, 
\begin{equation}
\begin{cases}
dX_s&=\left[b(X_s)+\si(X_s)\alpha_s\right]dt+\si(X_s)dW_s,~s\in\left[t,T\right],\\
X_t&=x\in\R^d.
\end{cases}.\label{eq:MFG2}
\end{equation}
where
\[
b^i(x)=\frac12\sum_{l=1}^m\sum_{j=1}^d\si_l^j(x)\partial_j\si_l^i(x),~i=1,\ldots,d,
\]
correspond to the Stratonovich to It\^o integral correction.

Since solutions of SDEs like \eqref{eq:MFG2} are expected to have
finite moments, we shall work in the space $\cP_2(\R^d)$
which consists of Borel probability measures with finite second moments,
i.e.,
\[
\cP_2(\R^d)=\left\{ \mu\in\cP_2(\R^d):\int_{\R^d}\left|x\right|^2d\mu(x)<\infty\right\} .
\]
Furthermore, we endow $\cP_2(\R^d)$
with the 2--Wasserstein distance $W_2$: if $\mu,\nu\in\cP_2(\R^d)$,
the 2--Wasserstein distance $W_2(\mu,\nu)$ is given
by:
\begin{equation}
W_2(\mu,\nu)=\inf_{\pi\in\Pi_2(\mu,\nu)}\left[\int_{\R^d\times\R^d}\left|x-y\right|^2\pi(dx,dy)\right]^{1/2},\label{eq:MK}
\end{equation}
where $\Pi_2(\mu,\nu)$ is the set of probability measures
in $\cP_2(\R^d\times\R^d)$
with marginals $\mu$ and $\nu$. Moreover, if $X,X'$ are square
integrable $\R^d-$valued random variables, we have
\[
W_2(\cL(X),\cL(X'))^2\leq \E\left[\left|X-X'\right|^2\right].
\]
%Note that \eqref{eq:MK} corresponds to the Monge-Kantorovich transportation
%problem associated with the quadratic cost $c(x,y)=\left|x-y\right|^2$;
%see, e.g., \cite{villani2021topics}. 

For our functionals defined on the 2-Wasserstein space, we will consider differentiation with respect to the probability measure $\mu$ as introduced by P. L. Lions in \cite{cardaliaguet2010notes}: for  (a differentiable) $f:\cP_2(\R^d)\to\R$, the derivative of $f$ with respect to $\mu$ is a function $\partial_\mu{f}:\cP_2(\R^d)\times\R^d\to\R^d$. In particular, we will focus on the space $C^{1,1}_b(\cP_2(\R^d))$ of continuously differentiable functionals over $\cP_2(\R^d)$  with Lipschitz-continuos bounded derivatives; see Section \ref{sec:Preliminaries} for details.

One can study \eqref{eq:MFG1}-\eqref{eq:MFG2} through a probabilistic representation of the value function
of the optimization problem as the solution of a Backward Stochastic
Differential Equation (BSDE). For this, we look for a minimizing control
\[
\hat\af(x,z,\mu)=\underset{a\in\R^m}{\arg\min}\ L(x,a,\mu)-a\cdot z,
~\hbox{ for all }(x,z,\mu)\in\R^d\times\R^m\times\cP_2(\R^d).
\]
We assume that $L(\cdot,\cdot,\mu)$ belongs to
$C^3(\R^d\times\R^m)$ for all $\mu\in\cP_2(\R^d)$ and that $L$ is strongly convex and
has quadratic growth in $a$, which imply the existence of a
unique minimizer, $\hat\af$; by the implicit function theorem, $\hat\af(\cdot,\cdot,\mu)$ belongs to
$C^2(\R^d\times\R^m)$  for all $\mu\in\cP_2(\R^d)$.

Therefore, we can
represent \eqref{eq:MFG1}-\eqref{eq:MFG2} as the following McKean-Vlasov FBSDE
(see \cite[Section 4.4]{carmona2018probabilistic}):
\begin{equation}
\begin{cases}
dX_s=\left[b(X_s)+\si(X_s)\hat{\alpha}(X_s,Z_s,\cL(X_s))\right]ds+\si(X_s)dW_s\\
dY_s=-L(X_s,\hat{\alpha}(X_s,Z_s,\cL(X_s)),\cL(X_s))ds+Z_s\cdot dW_s
\end{cases}\label{eq:Str_MFG}
\end{equation}
for $s\in\left[t,T\right]$, $X_t=x$, and $Y_T=g(X_T,\cL(X_T))$.

In \eqref{eq:Str_MFG}, the process $Z_s$ is referred to as
the {\em control process}. The function $L$ is called the {\em driver}
and the random variable $g(X_T,\cL(X_T))$ is the {\em terminal condition}. 

We present our main object of study in the following section.

\subsection{Formulation of the problem and connection with stochastic differential MFG}
To motivate our problem, we will focus on the following {\em version} of
\eqref{eq:MFG1}-\eqref{eq:MFG2} (see Section 3.3.1 \cite{carmona2018probabilistic}).
Let $(\Omega,\cF,\cF_s^t,P,W)$ be 
our generalized reference probability space
\cite{fabbri2017stochastic}(Definition 1.100) and assume now that $\alpha_r$, 
$r\in\left[t,T\right]$ is an $\cF_s^t-$progressively 
measurable process with values in $\R^m$ such that 

\begin{equation}
  \E\left[\exp(\frac12\int_t^T\left|\alpha_r\right|^2dr)\right]<\infty
  \hbox{ (Novikov's condition).}\label{eq:Nov}
\end{equation}
We will denote
\[H\ast W=\int_t^. H_rdW_r\]
for a process $(H_s)_{s\in[t,T]}$, 
and $\cE(Z)$ the stochastic exponential of a process $Z$.

Let $M=\cE(\af\ast W)$,
since the controls $\alpha$ satisfy Novikov's condition \eqref{eq:Nov}, 
Girsanov's theorem ensures that $M_s$ is a $P-$martingale and 
we can define a probability $\tilde{P}$ by setting $\tilde{P}(A)=\E\left[\mathbf1_{A}M_T\right],$
$A\in\cF$. Moreover, the process 
\[
\tilde{W}_s=W_s-W_t+\int_t^s\alpha_rdr 
\]
is an $m-$dimensional Brownian motion with respect to $\cF_s^t$
and $\tilde{P}$, and 
\begin{align*}
\int_t^s\si(X_r)d\tilde{W}_r= & \int_t^s\si(X_r)dW_r-\left[\int_t^s\si(X_r)dW_r, (H\ast W)_s\right]\\
= & \int_t^s\si(X_r)dW_r+\int_t^s\si(X_r)\alpha_rdr, 
\end{align*}
where $\left[\cdot,\cdot\right]$ is the $\R^d-$ valued 
covariation process. Considering the last expression, \eqref{eq:MFG2}
becomes 
\begin{equation}
dX_s=b(X_s)ds+\si(X_s)d\tilde{W}_s.\label{eq:MFG2b}
\end{equation}
We define 
\begin{equation}
\inf_{\af\in\A}J^{weak}(\alpha)\hbox{ with }J^{weak}(\alpha)=\E^{\tilde{P}}\left[g(X_T,\cL(X_T))+\int_t^TL(X_s,\alpha_t,\cL(X_s))ds\right].\label{eq:weak_cost}
\end{equation}

We associate to \eqref{eq:MFG2b}-\eqref{eq:weak_cost} 
the following FBSDE: 
\begin{equation}
\begin{cases}
dX_s=b(X_s)ds+\si(X_s)d\tilde{W}_s,\\
dY_s=-L(X_s,\hat{\alpha}(X_s,Z_s, \cL(X_s),\cL(X_s))ds+Z_s\cdot d\tilde{W}_s, 
\end{cases}\label{eq:weak_mfg2}
\end{equation}
$s\in\left[t,T\right]$, $X_t=x$, and $Y_T=g(X_T,\cL(X_T))$,  where 
\[
\hat{\alpha}(x,z,\mu)=\underset{a\in\mathbb A}{\arg\min}\left\{ L(x,a,\mu)-a\cdot z\right\}. 
\]

System (\ref{eq:weak_mfg2}) is the main object of study in this paper. As mentioned in Section 9 \cite{carmona2015probabilistic}, the study of regularity of (\ref{eq:MFG2b})-(\ref{eq:weak_cost}) may be based on (\ref{eq:weak_mfg2}). We emphasize that one of the main difficulties in our
analysis is related to the quadratic growth of $L$ in its second
variable. In what follows, we also assume that the functions $L$ and $g$ satisfy the Lasry-Lions monotonicity condition:
\begin{defn}\label{Lions}
A real valued function $B$ on $\R^d\times\cP_2(\R^d)$
is monotone (in the sense of Lasry and Lions) if, for all $\mu\in\cP_2(\R^d)$,
the mapping $\R^d\ni x\mapsto B(x,\mu)$ is at
most of quadratic growth, and, for all $\mu,\mu'\in\cP_2(\R^d)$
we have:
\[
\int_{\R^d}\left[B(x,\mu)-B(x,\mu')\right]d(\mu-\mu')(x)\geq0.
\]
\end{defn}

The rest of the paper is organized as follows.
In Section \ref{sec:result}, we present our results.
In Section \ref{sec:Preliminaries},
we review the necessary theory for our analysis, in particular Malliavin
calculus and differentiability of functionals defined on $\cP_2(\R^d)$.
In Section \ref{sec:Preliminaries2}, we recall the theory of FBSDE
with quadratic growth. Finally, we study
the McKean-Vlasov FBSDE \eqref{eq:weak_mfg2} and provide proofs of our results in Section \ref{sec:proof-results}.

\section{Statement of results}
\label{sec:result}

\begin{assumption}\label{assu:A0}
  We assume that
  \begin{enumerate}[(i)]
   \item The vector fields $\si_1,\ldots,\si_m: \R^d\to\R^d$ are
     $C^{2,\alpha}_b$, $\alpha>0$.
  \end{enumerate}

\begin{enumerate}[(i')]
  \item  The vector fields $\si_1,\ldots,\si_m$ are smooth and  have
    bounded derivatives of all orders.
\end{enumerate}

 \begin{enumerate}[(i)]
 \setcounter{enumi}{1}

\item  $g\in C^{1,1}_b(\R^d\times\cP_2(\R^d))$, see subsection \ref{sec:diff-meas}.

\item The functions $g$ and $L(\cdot,a,\cdot)$ for all $a\in\R^m$,
  satisfy the Lasry-Lions monotonicity condition. See Definition \ref{Lions}.
\end{enumerate}
\end{assumption}
\begin{assumption}
  \label{assu:A1}
  $L:\R^d\times\R^m\times\cP_2(\R^d)\to\R$ is such that
  $L(\cdot,\cdot,\mu)$ belongs to $C^3(\R^d\times\R^m)$ for all $\mu\in\cP_2(\R^d)$
and $L(x,a,\cdot), \nabla L(x,a,\cdot), D_{aa}^2L(x,a,\cdot)$ are
differentiable for all $x\in\R^d$, $a\in\R^m$, see subsection \ref{sec:diff-meas}.
Moreover,  there exist constants $C,\ga>0$ such that for
all $x\in\R^d$, $a,\xi\in\R^m$, $\mu\in\cP_2(\R^d)$ we have
\end{assumption}
\begin{enumerate} 
\item $|L(x,0,\mu)|\le C$, $|\nabla_xL(x,0,\mu)|\le C$, $|\nabla_aL(x,0,\mu)|\le C$.
\item $\xi^TD_{aa}^2L(x,a,\mu)\xi\geq\gamma\left|\xi\right|^2$,
\item $\left|D_{aa}^2L(x,a,\mu)\right|\leq C$,
\item $\left|D_{ax}^2L(x,a,\mu)\right|\leq C(1+\left|a\right|)$,
\item $\left|D_{xx}^2L(x,a,\mu)\right|\leq C(1+\left|a\right|^2)$.
  \item $\left|\partial_\mu (\nabla_aL)(x,a,\mu)\right|\leq C(1+\left|a\right|)$,
\item $\left|\partial_\mu  (\nabla_xL)(x,a,\mu)\right|\leq C(1+\left|a\right|^2)$
\end{enumerate}

Assumption \ref {assu:A1}  implies that there is a constant $C^*$
such that for all $x\in\R^d$, $a\in\R^m$, $\mu\in\cP_2(\R^d)$ we have 
\begin{enumerate}[(a)]
\item   $\dfrac\ga2\left|a\right|^2-C^*\leq L(x,a,\mu)\leq C^*(1+\left|a\right|^2)$,
\item  $\left|\nabla_xL(x,a,\mu)\right|\leq C^*(1+\left|a\right|^2),$
\item $\ga\left|a\right|^2\leq a^T\nabla_{a}L(x,a,\mu)+C^*\left|a\right|$,
\item $\ga\left|a\right|\leq\left|\nabla_{a}L(x,a,\mu)\right|+C^*$,
 $\left|\nabla_{a}L(x,a,\mu)\right| \leq C^*(1+\left|a\right|)$.
\end{enumerate}

\begin{assumption}
  \label{assu:A2}There exist constants $C,\eta>0$ such that for all $x,x'\in\R^d$
$a,a',\xi\in\R^m$, $\mu,\mu'\in\cP_2(\R^d)$   the function
$L:\R^d\times\R^m\times \cP_2(\R^d)\to\R$ satisfies
\begin{align}
\tag{1}
  -C\left|\xi\right|^2\leq D_{aaa}^{3}L((D_{aa}^2L)^{-1}\xi,(D_{aa}^2L)^{-1}\xi,&(D_{aa}^2L)^{-1}D_{a}L)
                                            \le\xi^T(D_{aa}^2L)^{-1}\xi-\eta\left|\xi\right|^2,\\
\tag2 |D_{xaa}^{3}L(x,a,\mu) |&\leq C, \\
  \tag{3}|\partial_\mu(D_{aa}^2L)(x,a,\mu,v)|&\le C,\\
  \tag{4} |\partial_\mu L(x',a',\mu',v') -\partial_\mu L(x,a,\mu,v)|&\\\nonumber
  \le C (1+|a|+|a'|)\{(1+|a|+|a'|)&(|x'-x|+W_2(\mu',\mu) +|v'-v|))+|a'-a|\},\\
  \tag{5} |D_{xa}^2L(x',a',\mu') -D_{xa}^2L(x,a,\mu)|
  \le C&\{(1+|a|+|a'|)(|x'-x|+W_2(\mu',\mu)+|a'-a|\},\\ 
  \tag{6} |\partial_\mu(\nabla_a L)(x',a',\mu',v') -\partial_\mu(\nabla_a L)(x,a,\mu,v)|&\\
  \le C\{(1+|a|+|a'|)&(|x'-x|+W_2(\mu',\mu)+|v'-v|)+|a'-a|\}.\nonumber
\end{align}

\end{assumption}

\begin{rem}
  Assumption \ref{assu:A2} specifies the quadratic behaviour of $L$.
  One can verify that for $f\in C_b^{1,1}(\R^d\times\cP_2(\R^d))$
  with $\inf f>0$, 
 the function $L(x,a,\mu)=f(x,\mu)|a|^2$ satisfies Assumptions \ref{assu:A1} and \ref{assu:A2}.
\end{rem}

To characterize the regularity of the involved stochastic
processes, we use the spaces $\D^{\infty}, \mL^{1,2}$, which
will be defined in subsection \ref{sec:Malliavin}.

\begin{thm}\label{thm:weak_MFG}
  Under Assumptions \ref{assu:A0}, \ref{assu:A1} and
  \ref{assu:A2}, there exists  
a unique solution \\ $(X^{t,x}_s,Y^{t,x}_s,Z^{t,x}_s)$ of \eqref{eq:weak_mfg2}
such that $\cL(X^{t,x}_s)$
is the unique equilibrium of the MFG associated with the stochastic
optimal control problem
\eqref{eq:MFG2b}-\eqref{eq:weak_cost}. Moreover

\begin{enumerate}
  \item 
    There exists a function
$\Omega\times\left[0,T\right]\times\R^d\to\R^d\times\R\times\R^m$,
$(\omega,t,x)\mapsto(X_t^x,Y_t^x,Z_t^x)(\omega)$
, such that for almost every $\omega$, the mappings $(t,x)\mapsto X_t^x(\om)$
and $(t,x)\mapsto Y_t^x(\om)$ are continuous in $t$ and
continuously differentiable in $x$ in the classical sense.

\item Under Assumption \ref{assu:A0}(i') we have that  each component
  of $X^{t,x}_s$ belongs to $\D^{\infty}$.

\item For any $t\in\left[0,T\right]$ and $x\in\R^d$, $(Y_t^x,Z_t^x)\in\mL^{1,2}\times(\mL^{1,2})^m$.
Furthermore, $\left\{ D_tY_t^x;t\in\left[0,T\right]\right\} $
is a version of $\left\{ Z_t^x;t\in\left[0,T\right]\right\} $.

\end{enumerate}
\end{thm}

\section{Review}\label{sec:Preliminaries} 

This section presents the basic ideas and notation used throughout this paper. 

\subsection{Malliavin calculus}
\label{sec:Malliavin}
Let $H$ be a real separable Hilbert space with scalar product and
norm denoted by $\langle \cdot,\cdot\rangle _H$ and
$\|\cdot\| _H$, respectively. Let $  W=\{ W(h),h\in H\} $
be an isonormal Gaussian process defined on a complete probability
space $(\Omega,\cF,P)$. Later on, we will focus on
the case where $(\Omega,\cF,P)$ is the canonical
probability space associated with an $m-$dimensional Brownian motion,
\[\{ W^i(t),t\in[0,T]\}\ i=1,\ldots,m,\
  H=L^2([0,T];\R^m),\hbox{ and }
  W(h)=\sum_{i=1}^m\int_0^Th_t^idW_t^i \]
  (Wiener integral), but we start with the abstract setting to avoid
cluttered notation.

We want to differentiate a square integrable random variable, $F:\Omega\to\R$,
with respect to the chance parameter $\omega\in\Omega$. For this,
let $\cS$ be the class of smooth random variables of the
form
\begin{equation}
F=f(W(h_1),\ldots,W(h_m)),\label{eq:SRV}
\end{equation}
where $f\in C_p^{\infty}(\R^m)$ (the set of
infinitely continuously differentiable functions $f:\R^m\to\R$
such that $f$ and all of its partial derivatives have polynomial
growth), $h_1,\ldots,h_m\in H$, and $m\geq1$. The (Malliavin)
derivative of a smooth random variable, $F$, of the form \eqref{eq:SRV}
is the $H-$valued random variable 
\[
DF=\sum_{i=1}^m\partial_if(W(h_1),\ldots,W(h_m))h_i.
\]

The operator $D$ is closable from $L^p(\Omega)$ to
$L^p(\Om;H)$ for any $p\geq1$ (see \cite[Proposition 1.2.1]{nualart2006malliavin}).
We denote the domain of $D$ in $L^p(\Omega)$ by $\D^{1,p}$,
i.e., $\D^{1,p}$ is the closure of the class $\cS$
with respect to the norm
\[
\left\Vert F\right\Vert _{1,p}=\left[\E(\left|F\right|^p)+\E(\left\Vert DF\right\Vert _H^p)\right]^{\frac1p}.
\]
Moreover, the iterated derivative $D^kF$ is a random variable with
values in $H^{\otimes k}$. For $1\leq k\in\mathbb{N}$, $p\geq1$,
and $F\in\cS$, let
\[
\left\Vert F\right\Vert _{k,p}=\left[\E(\left|F\right|^p)+\sum_{j=1}^k\E(\left\Vert D^jF\right\Vert _{H^{\otimes j}}^p)\right]^{\frac1p}.
\]
$\D^{k,p}$ denotes the completion of $\cS$ with
respect to the norm $\left\Vert \cdot\right\Vert _{k,p}$. In addition,
let 
\[
\D^{k,\infty}=\bigcap_{p>1}\D^{k,p}\hbox{ and }\D^{\infty}=\bigcap_{k>1}\bigcap_{p>1}\D^{k,p}.
\]
The previous definitions can be extended to Hilbert-valued random
variables. 

Note that if $H=L^2(\left[0,T\right];\R^m)$,
then $L^2(\Omega;H)\simeq L^2(\left[0,T\right]\times\Omega;\R^m)$
and hence the derivative of $F\in\D^{1,2}$ is a square integrable
$\R^m-$valued process. 

The adjoint of the operator $D$ is the divergence operator; we denote
it by $\delta$. In particular, $\delta$ is an unbounded operator
on $L^2(\Omega;H)$ with values in $L^2(\Omega)$
such that for all $u\in\hbox{Dom }\delta\subseteq L^2(\Omega;H)$
we have $\left|\E(\left\langle DF,u\right\rangle _H)\right|\leq C\left\Vert F\right\Vert _2$,
for every $F\in\D^{1,2}$, where $C$ depends on $u$. Furthermore,
$\E(F\delta(u))=\E(\left\langle DF,u\right\rangle _H)$.
The space $\D^{1,2}(H)$ is included in the domain
of $\delta$ (see \cite[Proposition 1.3.1]{nualart2006malliavin}.
If $H=L^2(\left[0,T\right])$, then $\hbox{Dom }\delta\subset L^2(\left[0,T\right]\times\Omega)$;
in this case, $\delta(u)$ is called the Skorohod stochastic
integral of the process $u$.

We denote by $\mL^{1,2}$ the space $\D^{1,2}(L^2(\left[0,T\right]))$.
$\mL^{1,2}$ coincides with the class of processes $u\in L^2(\left[0,T\right]\times\Omega)$
such that $u(t)\in\D^{1,2}$ for almost all $t$,
and there exists a measurable version of the two parameter process
$D_su_t$ such that $\E\int_0^T\int_0^t(D_su_t)^2dsdt<\infty$.
By \cite[Proposition 1.3.1]{nualart2006malliavin}, $\mL^{1,2}\subset\hbox{Dom }\delta$.
On the other hand, $\mL^{1,2}$ is a Hilbert space with the
norm
\[
\left\Vert u\right\Vert _{\mL^{1,2}}^2=\left\Vert u\right\Vert _{L^2(\left[0,T\right]\times\Omega)}^2+\left\Vert Du\right\Vert _{L^2(\left[0,T\right]^2\times\Omega)}^2.
\]
If, in addition, we consider an $m-$dimensional Brownian motion,
then the class of square integrable adapted processes (with respect
to the filtration generated by the Brownian motion) belongs to the
domain of $\delta$. Furthermore, $\delta$ restricted to such class
coincides with the It\^{o} integral (see \cite[Proposition 1.3.11]{nualart2006malliavin}).

Later, we will use a generalization of the space $\mL^{1,2}$.
To define it, consider $H=L^2(\left[0,T\right];\R^m)$
along with an $m-$dimensional Brownian motion. Let $\cH^p(\R^d)$
be the space of progressively measurable processes $(X_t)_{t\in\left[0,T\right]}$
with values in $\R^d$ normed by 
\[
\left\Vert X\right\Vert _{\cH^p}=\E\left[\left(\int_0^T\left|X_s\right|^2ds\right)^{p/2}\right]^{\frac1p}.
\]
For $1\leq k\in\mathbb{N}$, $p\geq1$, let $\mL^{k,p}(\R^d)$
be the class of $\R^d-$valued progressively measurable
processes $u=(u^1,\ldots,u^d)$ on $\left[0,T\right]\times\Omega$
such that
\begin{enumerate}
\item $u(t,\cdot)\in(\D^{k,p})^d$ for almost
all $t\in\left[0,T\right]$;
\item $t,\omega\to D_s^ku(t,\omega)\in(L^2(\left[0,T\right]^{k+1}))^{m\times d}$,
$s=(s_1,\ldots,s_k)\in\left[0,T\right]^k$, admits
a progressively measurable version;
\item $\left\Vert u\right\Vert _{\mL^{k,p}}=\left[\left\Vert \left|u\right|\right\Vert _{\cH^p(\R^d)}^p+\sum_{i=1}^k\left\Vert \left|D^iu\right|\right\Vert _{(\cH^p(\R^d))^{i+1}}^p\right]^{1/p}<\infty.$
\end{enumerate}
Finally, let $\cS^p(\R^d)$ be the
space of all measurable processes $(X_t)_{t\in\left[0,T\right]}$
with values in $\R^d$ normed by
\[
\left\Vert X\right\Vert _{\cS^p}=\E\left[\left(\sup_{t\in[0,T]}\left|X_t\right|\right)^p\right]^{^{\frac1p}}.
\]

\subsection{Differentiability of Functions of Probability Measures}
\label{sec:diff-meas}
We consider a ``rich enough'' probability space $\left(\Omega,\mathcal{F},P\right)$,
which means that for every $\mu\in\mathcal{P}_{2}\left(\mathbb{R}^{d}\right)$,
there is a random variable, $\upsilon\in L^{2}\left(\Omega,\mathcal{F},P;\mathbb{R}^{d}\right)$,
such that $\mathcal{L}\left(\upsilon\right)=\mu$; see details in
\cite{buckdahn2017mean} Section 2. 

A function $f:\cP_2(\R^d)\to\R$
is differentiable at $\mu\in\cP_2(\R^d)$
(in the sense of Lions) if, for $\tilde{f}(\xi):=f(\cL(\xi))$,
$\xi\in L^2(\cF;\R^d)$, there is some
$\xi_0\in L^2(\cF;\R^d)$ with $\cL(\xi_0)=\mu$,
such that the function $\tilde{f}:L^2(\cF,\R^d)\to\R$
is (Frechet) differentiable at $\xi_0$, that is, there exists a
linear continuous mapping, $D\tilde{f}(\xi_0):L^2(\cF;\R^d)\to\R$
such that
\[
\tilde{f}(\xi_0+\eta)-\tilde{f}(\xi_0)=D\tilde{f}(\xi_0)(\eta)+o(\| \eta\| _{L^2}),
\]
with $\| \eta\| _{L^2}\to0$ for $\eta\in L^2(\cF;\R^d)$.
By the Riesz representation theorem, there is a ($P-$a.s.) unique
random variable, $\Theta_0\in L^2(\cF;\R^d)$,
such that
\[
D\tilde{f}(\xi_0)(\eta)=\langle \Theta_0,\eta\rangle _{L^2}=E[\Theta_0\cdot\eta],
\]
for all $\eta\in L^2(\cF;\R^d)$. It was proved in
\cite{cardaliaguet2010notes} Theorem 6.5 that there is  $l_0\in L^2_\mu(\R^d,\R^d)$
such that $\Theta_0=l_0(\xi_0)$, $P-$a.s. Therefore,
we can write
\[
f(\cL(\xi))-f(\cL(\xi_0))=E[l_0(\xi_0)\cdot(\xi-\xi_0)]+o(\| \xi-\xi_0\| _{L^2}),
\]
$\xi\in L^2(\cF;\R^d)$.

We define 
\[
\partial_\mu f(\cL(\xi_0),y):=l_0(y),~y\in\R^d,
\]
the derivative of $f:\cP_2(\R^d)\to\R$
at $\cL(\xi_0)$. $\partial_\mu f(\cL(\xi_0),y)$
is $\cL(\xi_0)(dy)-$ a.e. uniquely
determined. 

Since we have to consider functions $f:\cP_2(\R^d)\to\R$
which are differentiable over the whole space $\cP_2(\R^d)$,
we will assume that $\tilde{f}:L^2(\cF;\R^d)\to\R$
is Frechet differentiable in all of $L^2(\cF;\R^d)$.
In this case, $\partial_\mu f(\cL(\xi),y)$
is defined $\cL(\xi)(dy)-$a.e. for
all $\xi\in L^2(\cF;\R^d)$. Furthermore,
Lemma 3.3\cite{carmona2015forward} shows that if the Frechet derivative $D\tilde{f}$
is Lipschitz continuous (Lipschitz constant $K$), then there exists
for every $\xi\in L^2(\cF;\R^d)$ an
$\cL(\xi)-$version of $\partial_\mu f(\cL(\xi),\cdot):\R^d\to\R^d$
with
\[
|\partial_\mu f(\cL(\xi),y)-\partial_\mu f(\cL(\xi),y')|\leq K|y-y'|\hbox{ for all }y,y'\in\R^d.
\]

In \cite{buckdahn2017mean}, these results motivate the following
\begin{defn}\label{c11mu}\quad
  \begin{enumerate}[(a)]
  \item  $f\in C_b^{1,1}(\cP_2(\R^d))$ %(continuously differentiable on
% $\cP_2(\R^d)$ with Lipschitz-continuous bounded derivatives),
if  for all $\xi\in L^2(\cF;\R^d)$ there exists
an $\cL(\xi)-$modification of $\partial_\mu f(\cL(\xi),\cdot)$,
again denoted by $\partial_\mu f(\cL(\xi),\cdot)$,
such that $\partial_\mu f:\cP_2(\R^d)\times\R^d\to\R^d$
is bounded and Lipschitz continuous, that is, there is a constant $C$ such that, for $v,v'\in\R^d$,
and   $\mu,\mu'\in\cP_2(\R^d)$ we have
\begin{align*}
|\partial_\mu f(\mu,v)|&\leq C\\
|\partial_\mu f(\mu,v)-\partial_\mu f(\mu',v')|&\leq C(W_2(\mu,\mu')+|v-v'|)
\end{align*}

We consider this function $\partial_\mu f$
as the derivative of $f$. 
\item Let $g:\R^d\times\cP_2(\R^d)\to\R$ be such that for any
  $\mu\in\cP_2(\R^d)$, $g(\cdot,\mu)$ is differentiable and for any
  $x\in\R^d$,  $g(x,\cdot)$ is differentiable.
We say that $g\in C_b^1(\R^d\times\cP_2(\R^d))$ if $\nabla_xg$, $\partial_\mu g$
are continuous and bounded.
We say that $g\in C_b^{1,1}(\R^2\times\cP_2(\R^d))$,
if there $C$ is a constant, such that for $x,x',v,v'\in\R^d$,
  $\mu,\mu'\in\cP_2(\R^d)$ we have
  \begin{align*}
    |\nabla_xg(x,\mu)-\nabla_xg(x',\mu')|&\le C(|x-x'|+W_2(\mu,\mu'))\\
 |\partial_\mu g(x,\mu,v)-\partial_\mu g(x',\mu',v')|&\le C(|x-x'|+W_2(\mu,\mu') +|v-v'|)
  \end{align*}
 \end{enumerate}
\end{defn}

\section{Preliminary results}\label{sec:Preliminaries2} 
In Section \ref{sec:proof-results}, we will study the McKean-Vlasov system 
\begin{align}
X_t^x= & x+\int_0^tb(X_s^x)dr+\int_0^t\si(X_s^x)\ dW_s,\label{eq:SDE}\\
Y_t^x= & g(X^x_T,\cL(X^x_T))-\int_t^TZ_s^x\cdot
         dW_s+\int_t^TF(s,X_s^x,Z_s^x,\cL(X_s^x)) ds,\label{eq:BSDE}
\end{align}
where $b$, $\si$, $g$, and $W$ are as in section \ref{sec:result}
and $F:[0,T]\times\R^d\times\R^m\times\cP_2(\R^d)\to\R$
satisfies Assumption \ref{assu:A3} below. We follow closely the theory in \cite{dos2010some} for the the related (non McKean-Vlasov) FBSDE, which we recall in this section. 

\subsection{Basic  results on SDE}

We have the following results for the SDE \eqref{eq:SDE};
see Theorem 3.1 in \cite{dudley1984stochastic}, Section 2.2 in \cite{nualart2006malliavin}, and Section 7.5 in \cite{nualart2018introduction} for details.
\begin{thm}[Existence and moment estimates]
  Let Assumption \ref{assu:A0}(i) hold, then \eqref{eq:SDE} has a unique solution 
and for any $p\geq2$, $x\in\R^d$, $s,t\in[0,T]$, we have
\begin{equation}
\E[\sup_{t\in[0,T]}|X^x_t|^p]\leq C (1+|x|^p),\label{eq:D_1.2.3}
\end{equation}
\begin{equation}
\E[\sup_{u\in[s,t]}|X^x_u-X^x_s|^p]\leq C (1+|x|^p)|t-s|^{p/2}.\label{eq:D_1.2.4}
\end{equation}
Moreover, given $x,x'\in\R^d$ we have
\begin{equation}
\E[\sup_{t\in[0,T]}|X_t^x-X_t^{x'}|^p]\leq C|x-x'|^p.\label{eq:D_1.2.5}
\end{equation}
\end{thm}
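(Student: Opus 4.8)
The plan is to treat \eqref{eq:SDE} as a standard It\^o SDE and to verify that its coefficients satisfy the global Lipschitz and linear growth conditions under which existence, uniqueness, and the three moment estimates are classical. First I would record that Assumption \ref{assu:A0}(i) makes each $\si_l$ of class $C^{2,\alpha}_b$, so in particular $\si$ has bounded first derivatives and is therefore globally Lipschitz with at most linear growth; the same holds for the drift, since $b^i=\frac12\sum_{l,j}\si_l^j\partial_j\si_l^i$ is built from bounded-derivative functions, placing $b$ in $C^{1,\alpha}_b$. Thus there is a constant $C$ with $|b(x)-b(y)|+|\si(x)-\si(y)|\le C|x-y|$ and $|b(x)|+|\si(x)|\le C(1+|x|)$ for all $x,y\in\R^d$, which is all the regularity the argument needs.

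For existence and uniqueness I would run the usual Picard/contraction argument: on a short time interval the map $\Phi(X)_t=x+\int_0^tb(X_s)\,ds+\int_0^t\si(X_s)\,dW_s$ is a contraction on the complete space $\cS^2(\R^d)$ of adapted continuous processes, the Lipschitz bounds together with the It\^o isometry and the Burkholder--Davis--Gundy (BDG) inequality controlling the stochastic integral, and one then patches finitely many such intervals to cover $[0,T]$. This is exactly the conclusion of the cited Theorem~3.1 in \cite{dudley1984stochastic}, so I would either invoke it directly or reproduce this short argument.

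For the moment bound \eqref{eq:D_1.2.3} I would apply It\^o's formula to $(1+|X_t|^2)^{p/2}$ (to avoid the singularity of $|x|^p$ at the origin), localize by stopping times $\tau_n=\inf\{t:|X_t|\ge n\}$ so that all expectations are finite, take expectations, and estimate the stochastic integral via BDG. Linear growth then yields $\E[\sup_{r\le t}|X_r|^p]\le C(1+|x|^p)+C\int_0^t\E[\sup_{r\le s}|X_r|^p]\,ds$, and Gronwall's inequality closes the estimate after letting $n\to\infty$ by monotone convergence. The continuity estimate \eqref{eq:D_1.2.4} follows from $X_u-X_s=\int_s^u b(X_r)\,dr+\int_s^u\si(X_r)\,dW_r$, splitting with $(a+b)^p\le 2^{p-1}(a^p+b^p)$, bounding the drift by H\"older in time and the martingale by BDG followed by H\"older; with the linear growth bound and \eqref{eq:D_1.2.3} this gives $C(1+|x|^p)|t-s|^p$ for the drift and $C(1+|x|^p)|t-s|^{p/2}$ for the martingale, the latter dominating since $p\ge2$ and $T<\infty$. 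Finally, for \eqref{eq:D_1.2.5} I would set $\De_t=X_t^x-X_t^{x'}$, write its SDE, and use the \emph{Lipschitz} bounds on the differences of coefficients (rather than linear growth) together with BDG to obtain $\E[\sup_{r\le t}|\De_r|^p]\le C|x-x'|^p+C\int_0^t\E[\sup_{r\le s}|\De_r|^p]\,ds$, closing again by Gronwall.

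The main obstacle is technical rather than conceptual: making the BDG--Gronwall scheme rigorous requires the localization step, since a priori the moments could be infinite, and one must check that the stopped estimates pass to the limit. The one point demanding genuine care is the half-power rate $|t-s|^{p/2}$ in \eqref{eq:D_1.2.4}: it is the martingale contribution, extracted by applying H\"older's inequality in the time variable to the BDG bound for $\E[(\int_s^t|\si(X_r)|^2\,dr)^{p/2}]$, that produces the correct exponent, and I would verify that the drift term, scaling like $|t-s|^p$, is genuinely absorbed by it on the bounded interval $[0,T]$.
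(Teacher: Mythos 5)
Your proposal is correct and is exactly the standard Picard--It\^o--BDG--Gronwall argument that the paper relies on: the paper gives no proof of its own here, simply citing Theorem~3.1 of Dudley--Norvai\v{s}a, Section~2.2 of Nualart's book, and Section~7.5 of the introduction text, all of which prove the result along the lines you describe. Your attention to the localization step and to the origin of the $|t-s|^{p/2}$ rate from the martingale term is exactly the right emphasis.
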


\begin{thm}
  [Classical differentiability]\label{thm:c_diff}
 Let Assumption \ref{assu:A0}(i) hold, then the solution process
 $X_t^x$  of \eqref{eq:SDE}  is continuously differentiable
as a function of the initial condition $x$.
Let $\nabla_xX_t^x$ be the Jacobian matrix, %$\frac{\partial X}{\partial x}(t,x)$,
then
\[
\nabla_xX_t^x=I_d+\int_0^t\nabla_x\si(X_s^x)\cdot\nabla_xX_s^x\
dW_s+\int_0^t\nabla_xb(X_s^x)\nabla_xX_s^x\ ds.
\]
%Where ``$\cdot$'' stands for tensor inner product, i.e., tensor product and contraction.
Furthermore, for any $p\ge 2$
\begin{align}\label{eq:D1.2.6}
\sup_{x\in\R^d}\|\nabla_xX^x\| _{\cS^p} \leq C_p,\\
\E[\sup_{s\le u\le t}\|\nabla_xX_u^x-\nabla_xX_s^x|^{p}] & \leq C_p|t-s|^{p/2},\label{eq:D1.2.7}
\end{align}
and given $x,x'\in\R^d$ we have
\begin{equation}
  \E[\sup_{0\leq t\leq T}|\nabla_xX_t^x-\nabla_xX_t^{x'}|^{p}]\leq C_{p}|x-x'|^{p/2}.\label{eq:D_1.2.7}
\end{equation}
Moreover, $\nabla_xX_t^x$ as an $d\times d$
matrix is invertible for any $t\in\left[0,T\right]$ and its inverse
$(\nabla_xX_t^x)^{-1}$ satisfies an SDE.
\end{thm}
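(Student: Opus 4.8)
The plan is to establish the result by the classical method of difference quotients combined with Gronwall's inequality, following the approach in the cited references. I would first introduce the candidate derivative process: formally differentiating \eqref{eq:SDE} in the initial condition $x$ suggests that the Jacobian $J_t := \nabla_x X_t$ should solve the linear SDE
\[
J_t = I_d + \int_0^t \nabla_x b(X_s) J_s \, ds + \int_0^t (\nabla_x \si(X_s) \cdot J_s) \cdot dW_s.
\]
Since Assumption \ref{assu:A0}(i) gives $\si \in C^{2,\alpha}_b$ and hence $b \in C^{1,\alpha}_b$, the coefficients $\nabla_x b$ and $\nabla_x \si$ are bounded, so this is a linear SDE with bounded (random) coefficients. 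Standard existence theory then yields a unique solution $J$ with finite moments of every order, and applying the Burkholder--Davis--Gundy (BDG) inequality together with Gronwall to the $p$-th moment of $\sup_{s\le t}|J_s|$ produces the uniform bound \eqref{eq:D1.2.6}.

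Next I would show that the difference quotients converge to $J$. Fix a coordinate direction $e_j$ and set $\Delta_t^h := h^{-1}(X_t^{x+he_j} - X_t^x)$. Expressing the increments of $b$ and $\si$ through the fundamental theorem of calculus, for instance
\[
b(X_s^{x+he_j}) - b(X_s^x) = \left( \int_0^1 \nabla_x b\big(X_s^x + \theta(X_s^{x+he_j} - X_s^x)\big)\, d\theta \right)(X_s^{x+he_j} - X_s^x),
\]
shows that $\Delta^h$ itself solves a linear SDE with bounded random coefficients. Subtracting the equation for the $j$-th column $J^{(j)}$ and applying BDG, Jensen, and Gronwall, one bounds $\E[\sup_{s\le t}|\Delta_s^h - J_s^{(j)}|^p]$ by an $L^p$-modulus of continuity of $\nabla_x b$ and $\nabla_x \si$ evaluated along the flow; this tends to $0$ as $h \to 0$ by the continuity of these derivatives together with the path estimate \eqref{eq:D_1.2.5}. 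This establishes both the differentiability of $x \mapsto X_t^x$ and the identification of the derivative with $J$.

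The remaining assertions are routine consequences. The time-regularity estimate \eqref{eq:D1.2.7} follows by applying BDG and H\"older directly to the increments of the stochastic and drift integrals defining $J$, using the moment bound \eqref{eq:D1.2.6}. For continuity in $x$, I would subtract the linear SDEs solved by $\nabla_x X^x$ and $\nabla_x X^{x'}$; the coefficient differences such as $\nabla_x b(X_s^x) - \nabla_x b(X_s^{x'})$ are controlled via the H\"older regularity of $\nabla_x b$ and $\nabla_x \si$ and the estimate \eqref{eq:D_1.2.5}, and Gronwall yields \eqref{eq:D_1.2.7}, hence joint continuity of $(t,x) \mapsto \nabla_x X_t^x$. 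Invertibility is obtained by introducing the candidate inverse $K$ as the solution of the linear SDE
\[
K_t = I_d - \int_0^t K_s \nabla_x b(X_s)\, ds + \int_0^t K_s \sum_{k=1}^m (\nabla_x \si_k(X_s))^2 \, ds - \int_0^t K_s \nabla_x \si(X_s) \cdot dW_s,
\]
where the drift carries the It\^o correction from the quadratic variation, and verifying by It\^o's product rule that $d(J_t K_t) = 0$, so that $J_t K_t \equiv I_d$.

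I expect the convergence of the difference quotients to be the main obstacle: the tensorial structure of the diffusion coefficient and the passage to zero of the $L^p$-modulus of continuity of the derivatives require careful bookkeeping, whereas the remaining steps reduce to applications of BDG and Gronwall with the moment estimates already available.
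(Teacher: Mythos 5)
The paper offers no proof of this theorem at all — it is quoted as a standard result with pointers to the references (Kunita's flow theory and Nualart, Section 2.2) — and your sketch is precisely the classical variational-equation argument those references use: the linear SDE for the candidate Jacobian, BDG plus Gronwall for the moment bound and for the convergence of difference quotients, and the explicit inverse SDE verified by It\^o's product rule, so your approach is essentially the same and is correct. The one point worth flagging is that Assumption 0(i) only gives $\nabla_x b$ and $\nabla_x^2\si$ as $\alpha$-H\"older, so the Gronwall step you describe for \eqref{eq:D_1.2.7} naturally produces the exponent $\alpha p$ rather than $p/2$ unless $\alpha\ge 1/2$; this is an imprecision in the paper's stated estimate rather than a gap in your argument.
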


\begin{thm}
  [Malliavin differentiability]\label{thm:Malliavin}
  Let Assumption \ref{assu:A0}(i) hold and let $X^x_t$ be the solution of the stochastic
    differential equation \eqref{eq:SDE}. Then each component $X^i(t)$
  of $X^{t,x}_s$ belongs to $\D^{\infty}$.
  % \in\D^{1,\infty}$ for all $t\in\left[0,T\right]$ and $i=1,\ldots,d$.
  Moreover, 
\begin{equation}
\underset{0\leq r\leq t}{\sup}\mathbb{E}\left(\underset{r\leq s\leq T}{\sup}\left|D_{r}^{j}X^{i}\left(s\right)\right|^{p}\right)<\infty.\label{eq:M_der_bound}
\end{equation}
The Malliavin derivative admits a version $(u,t) \mapsto D_uX_t$ which satisfies an SDE. By Theorem \ref{thm:c_diff} we have the representation
\begin{equation}
D_{u}X^x_{t}=\nabla_{x}X^x_{t}(\nabla_{x}X^x_{u})^{-1}\sigma(X^x_{u})1_{\left[0,u\right]}(t)\hbox{ for all }u,t\in\left[0,T\right];\label{eq:rep1.2.10}
\end{equation}
see eq. (2.59) in \cite{nualart2006malliavin}.
\end{thm}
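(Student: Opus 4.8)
The plan is to follow the classical Picard-approximation argument for Malliavin differentiability of SDE solutions, as in \cite[Section 2.2]{nualart2006malliavin}, combined with the closedness of the operator $D$, and then to identify the derivative with the Jacobian flow by a uniqueness argument. The starting point is the observation that Assumption \ref{assu:A0}(i) forces $\si$ and $b$ to have bounded first derivatives, so both are globally Lipschitz with at most linear growth; this places \eqref{eq:SDE} within the scope of the standard Malliavin-differentiability theory for Lipschitz coefficients, with $C^1_b$ regularity being exactly what the chain rule for $D$ requires.

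First I would introduce the Picard scheme $X^{(0)}_t\equiv x$ and $X^{(n+1)}_t=x+\int_0^t b(X^{(n)}_s)\,ds+\int_0^t\si(X^{(n)}_s)\,dW_s$, which converges to $X$ in $L^p(\Om)$ for every $p\ge2$ by the moment estimates \eqref{eq:D_1.2.3}. By induction each $X^{(n)}_t$ lies in $\D^{1,p}$: the inductive step applies the chain rule for $D$ (valid since $b,\si\in C^1_b$) together with the commutation of $D$ with the It\^o integral on adapted integrands, and it produces for $D_rX^{(n+1)}_t$ a linear equation of the same form as the limiting one below, with $X$ replaced by the iterate $X^{(n)}$ and initial datum $\si(X^{(n)}_r)$ at $t=r$. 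Since $\nabla_xb$ and $\nabla_x\si$ are bounded, a Burkholder--Davis--Gundy estimate followed by Gronwall's inequality yields bounds on $\E[\sup_{r\le s\le T}|D_rX^{(n)}_s|^p]$ that are uniform in $n$ and in $r\in[0,t]$; here the initial datum $\si(X^{(n)}_r)$ is controlled in every $L^p$ by the linear growth of $\si$ and \eqref{eq:D_1.2.3}.

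With these uniform bounds available, the closedness of $D$ upgrades the convergence $X^{(n)}_t\to X_t$ to membership: by the standard criterion that $F_n\to F$ in $L^2(\Om)$ together with $\sup_n\|DF_n\|_{L^2(\Om;H)}<\infty$ forces $F\in\D^{1,2}$ with $DF_n\rightharpoonup DF$, one obtains $X^i_t\in\D^{1,p}$ for all $p>1$, hence $X^i_t\in\D^{1,\infty}$; the uniform bound passes to the limit and gives the moment estimate \eqref{eq:M_der_bound}. Passing to the limit in the recursion, using the $L^p$-convergence of the iterates and the continuity of $\nabla_xb,\nabla_x\si$, shows that for each fixed $r$ the process $t\mapsto D_rX_t$ solves the linear SDE
\[
D_rX_t=\si(X_r)+\int_r^t\nabla_xb(X_s)\,D_rX_s\,ds+\int_r^t\nabla_x\si(X_s)\cdot D_rX_s\,dW_s,\qquad t\ge r,
\]
with $D_rX_t=0$ for $t<r$.

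Finally I would derive the representation \eqref{eq:rep1.2.10} by uniqueness. By Theorem \ref{thm:c_diff}, the Jacobian $\nabla_xX_t$ solves the same linear variational equation started from $I_d$ at $t=0$ and is invertible, so for $t\ge r$ the process $\Phi_t:=\nabla_xX_t(\nabla_xX_r)^{-1}\si(X_r)$ satisfies $\Phi_r=\si(X_r)$ and, upon multiplying the variational equation on the right by the $\cF_r$-measurable matrix $(\nabla_xX_r)^{-1}\si(X_r)$, solves exactly the linear SDE displayed above. As that SDE has bounded coefficients and hence a unique solution, $D_rX_t=\Phi_t$ for all $t\ge r$, which is \eqref{eq:rep1.2.10}. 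I expect the main obstacle to be not any single estimate but the careful justification of the inductive step --- that each iterate indeed lies in $\D^{1,p}$ and that $D$ may be exchanged with the stochastic integral of the adapted integrand --- together with checking that the $L^p$-bounds on the derivative processes are genuinely uniform in both $n$ and $r$, since it is precisely this uniformity on which both the closedness argument and the moment bound \eqref{eq:M_der_bound} depend.
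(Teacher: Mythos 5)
Your proposal is correct and is precisely the argument the paper relies on: the paper gives no proof of its own here but cites Section 2.2 of Nualart's book (and eq.\ (2.59) therein), whose content is exactly your Picard-iteration scheme, the closedness of $D$ under uniform $\D^{1,p}$-bounds, the linear SDE for $D_rX_t$, and the identification with the Jacobian flow by uniqueness. The only cosmetic remark is that the indicator in \eqref{eq:rep1.2.10} as printed should be read as $\mathbf 1_{\{u\le t\}}$, which is how you have correctly interpreted it.
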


\subsection{Results on Lipschitz BSDE}

In this and the following subsection, we recall relevant results for
our computations concerning moment 
estimates for BSDE with drivers that satisfy Lipschitz conditions
with random Lipschitz constant. For more details, particularly the
BMO (Bounded Mean Oscillation) property, see Section 1.2.5 in \cite{dos2010some}.

Let $\psi:\Om\times[0,T]\times\R\times\R^d\to\R$ be a measurable function and let $\zeta$ be a random
variable. Consider the BSDE
\begin{equation}
U_t=\zeta-\int_t^TV_sdW_s+\int_t^T\psi\left(\cdot,s,U_s,V_s\right)ds,\quad
t\in\left[0,T\right].\label{eq:BSDE_lip}
\end{equation}
For $p\geq1$, assume
\begin{description}
\item [(A1)] $\zeta$ is an $\cF_T$-adapted random variable
and $\zeta\in L^{2p}(\R)$.
\item [(A2)] $\psi:\Om\times[0,T]\times\R\times\R^d\to\R$
is measurable and there exists a positive constant, $M$,
and a positive predictable process, $(H_t)_{t\in[0,T]}$,
such that for all $t\in[0,T],$ $u,u'\in\R$, and
$v,v'\in\mathbb{R}^m$, we have
\begin{equation}
|\psi(\cdot,t,u,v)-\psi(\cdot,t,u',v')|\leq M|u-u'|+H_t|v-v'|.\label{eq:lm_bsde_a2}
\end{equation}
Furthermore, $H\ast W$ is a BMO martingale.
\item [(A3)] $\left(\psi\left(\cdot,t,0,0\right)\right)_{t\in\left[0,T\right]}$
is a measurable $\cF_t$-adapted process such that for every
$p\geq1$ we have $\displaystyle\E\left[\Bigl(\int_0^t|\psi(\cdot,s,0,0)|ds\Bigr)^{p}\right]<\infty$.
\end{description}

We have the following result (Lemma 2.1.1 in \cite{dos2010some}):
\begin{lem}
  \label{lem:Mom_bsde_lip}
  Assume (A1)-(A3). Let $p\geq1$ and $\bar{r}>1$
be such that $\cE(H\ast W)\in L^{\bar{r}}\left(\mathbb{P}\right)$.
Assume that the pair $(U,V)$ is a square integrable
solution of (\ref{eq:BSDE_lip}).
Then, there exists a positive constant, C, depending only on $p,$
$T,$ $M$, and the BMO norm of $H\ast W$, such that, with the conjugate
exponent $\bar{q}$ of $\bar{r}$, we have
\[
\|U\|_{\cS^{2p}}^{2p}+\|V\|_{\cH^{2p}}^{2p}\leq
C\E\left[|\zeta|^{2p\bar{q}^2}+\Bigl(\int_0^t|\psi(\cdot,s,0,0)|\
    ds\Bigr)^{2p\bar{q}^2}\right]^{1/q^2}.
\]
\end{lem}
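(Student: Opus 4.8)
The plan is to reduce the BSDE with random Lipschitz constant to a linear one, and then to eliminate the dependence on the control process $V$ by a Girsanov change of measure whose density is controlled through the BMO hypothesis. First I would linearize the driver along the solution: since $F$ is Lipschitz in $(u,v)$ with constants $M$ and $H_t$, I can write
\[
F(\cdot,s,U_s,V_s)=F(\cdot,s,0,0)+a_sU_s+b_s\cdot V_s,
\]
where $a_s,b_s$ are the usual predictable difference quotients satisfying $|a_s|\le M$ and $|b_s|\le H_s$. The equation for $(U,V)$ is then linear with inhomogeneous term $F(\cdot,s,0,0)$.

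Next I would perform the change of measure. Because $|b_s|\le H_s$ and $H\ast W$ is a BMO martingale, $b\ast W$ is BMO as well, so Kazamaki's criterion guarantees that $\cE(b\ast W)$ is a uniformly integrable martingale; I set $d\mathbb{Q}=\cE(b\ast W)_T\,d\mathbb{P}$, under which $\tilde W_t=W_t-\int_0^t b_s\,ds$ is a Brownian motion. The structural input I would lean on here is the \emph{reverse H\"older inequality}: since $\cE(H\ast W)\in L^{\bar r}(\mathbb{P})$, the density $\cE(b\ast W)$ satisfies a reverse H\"older inequality with exponent governed by $\bar r$ and the BMO norm, and this is precisely what allows conditional expectations to be transferred between $\mathbb{P}$ and $\mathbb{Q}$ at the cost of the conjugate exponent $\bar q$.

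Introducing the bounded integrating factor $\exp\!\big(\int_t^s a_r\,dr\big)$, which is dominated by $e^{MT}$, I would take conditional $\mathbb{Q}$-expectations to obtain the Feynman--Kac representation
\[
U_t=\E^{\mathbb{Q}}\Big[e^{\int_t^T a_r dr}\zeta+\int_t^T e^{\int_t^s a_r dr}F(\cdot,s,0,0)\,ds\;\Big|\;\cF_t\Big],
\]
so that $|U_t|\le e^{MT}\,\E^{\mathbb{Q}}\big[|\zeta|+\int_t^T|F(\cdot,s,0,0)|\,ds\mid\cF_t\big]$. Rewriting $\E^{\mathbb{Q}}[\,\cdot\mid\cF_t]$ through the normalized density and applying H\"older with exponents $(\bar r,\bar q)$ together with the reverse H\"older bound yields $|U_t|\le C(\E^{\mathbb{P}}[\Xi^{\bar q}\mid\cF_t])^{1/\bar q}$ with $\Xi:=|\zeta|+\int_0^T|F(\cdot,s,0,0)|\,ds$; Doob's maximal inequality applied to the martingale $t\mapsto\E^{\mathbb{P}}[\Xi^{\bar q}\mid\cF_t]$ then controls $\|U\|_{\cS^{2p}}$, and it is this iterated use of H\"older that produces the doubled exponent $\bar q^2$ on the right-hand side. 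For the $V$-estimate I would apply It\^o's formula to $|U_t|^2$ to get the energy identity, bound $U_sF$ with the Lipschitz estimate, and absorb the term $H_s|U_s||V_s|$ via Young's inequality into $\tfrac12|V_s|^2+CH_s^2|U_s|^2$; raising to the power $p$, the Burkholder--Davis--Gundy inequality handles the martingale part while the energy inequality for BMO martingales (which bounds $\E[(\int_0^T H_s^2\,ds)^k]$ for every $k$) absorbs the remaining quadratic contribution, giving $\|V\|_{\cH^{2p}}$ in terms of $\|U\|_{\cS^{2p}}$ and $\Xi$.

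The main obstacle is the random, merely BMO-integrable Lipschitz coefficient $H_t$ in the $v$-variable: the Girsanov density cannot be bounded uniformly, so one must invoke the full strength of BMO theory --- Kazamaki's criterion, the reverse H\"older (Muckenhoupt-type) inequality, and the energy inequalities --- both to justify the change of measure and to recover $L^{2p}$ integrability from it. The doubled conjugate exponent $\bar q^2$ is the unavoidable price of the two successive H\"older applications, one for the representation of $U$ and one in the passage back to $\mathbb{P}$ when estimating $V$.
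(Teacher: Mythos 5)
The paper does not actually prove this lemma: it is imported verbatim as Lemma 2.1.1 of \cite{dos2010some}, so the only ``proof'' here is a citation. Your argument is, in outline, exactly the proof given in that reference --- linearization of the driver, Girsanov with density $\cE(b\ast W)$ justified by Kazamaki, the reverse H\"older inequality to return to $\mathbb{P}$ at the cost of $\bar q$, Doob's maximal inequality for $U$, and It\^o plus Burkholder--Davis--Gundy plus the energy inequalities for $V$ --- so it is the same approach and correct in substance. One small point of attribution: the exponent $\bar q^2$ already appears in the $U$-estimate alone (one factor of $\bar q$ from the conditional reverse H\"older step, a second from applying Doob/Jensen at level $2p\bar q$ to the martingale $t\mapsto\E[\Xi^{\bar q}\mid\cF_t]$ so that the maximal function is integrable), rather than being split between the $U$- and $V$-estimates as your closing remark suggests.
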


\subsection{Results on FBSDE\label{subsec:FBSDE}}

Basic results can be found in
\cite{ankirchner2007classical,ankirchner2010pricing,dos2010some,dos2011some,imkeller2010path}.
As in \cite{dos2010some}, we consider the parameterized BSDE
\begin{equation}
  \label{BSDE}
Y_t^x= \xi(x)-\int_t^TZ_s^x\cdot dW_s+\int_t^Tf(s,x,Z_s^x)\ ds
\end{equation}
under the condition
\begin{description}
\item[(C1)]
$f : \Om \times [0, T] \times \R^d \times \R^m \to \R$ is an
adapted measurable function, differentiable in the spatial variables with
continuous partial derivatives. There is a positive constant $M$ and a
positive process $(K_t(x))_{t\in[0,T ]}$ depending on $x\in \R^d$ such that for all
$(t, x, z)  \in[0, T ] \times\R^d\times\R^m$
\begin{align*}
  |f (t, x, z)| &\le M (1 + |z|^2 )  \quad\hbox{a.s.},\\
|\nabla_x f (t, x, z)|& \le K_t(x)(1 + |x| + |z|^2 ) \quad\hbox{a.s.},\\
|\nabla_z f (t, x, z)|& \le M (1 + |z|) \quad\hbox{a.s.}\\
\sup_{x\in\R^d}\|K(x)\|_{\cS^{2p}} &<\infty  \quad\hbox{for any }
  p\ge 1.
\end{align*}
For any $x \in\R^d$, $\xi(x)$ is a random variable 
$\cF_T$-adapted such that $\sup\limits_{x\in\R^d}\|\xi(x)\|_{L^\infty} < \infty$;
for all $p\ge 1$ the map $\R^d\mapsto L^{2p}$ ,
$x\mapsto \xi(x)$ is differentiable, its derivative $\nabla_x\xi$
belongs to $L^{2p}(\R^d)$, is continuous and
$\sup\limits_{x\in\R^d}\|\nabla_x\xi(x)\|_{L^{2p}}<\infty$.
\end{description}
\begin{thm}[Differentiability]\label{difDosReis}\cite{dos2010some} Theorem 3.1.3.
  Assume (C1) holds. Then for all $x\in\R^d$ and $p\ge 1$
  (\ref{BSDE}) has a unique solution $(Y^x_.,Z^x_.)\in\cS^p(\R)\times\cH^{2p}(\R^m)$.
Moreover the map $\R^d\to\cS^{2p}(\R)\times\cH^{2p}(\R^m)$,
$x\mapsto(Y^x,Z^x)$ is differentiable and the derivative is a solution
of
\begin{equation}
  \label{derYparam}
  \nabla Y_t^x= \nabla\xi(x)-\int_t^T\nabla Z_s^x\cdot dW_s+
  \int_t^T(\nabla_xf(s,x,Z_s^x)+\nabla_zf(s,x,Z_s^x)\nabla  Z_s^x)\ ds.
  \end{equation}
\end{thm}
%As in \cite{dos2010some} we
Consider the BSDE
\begin{equation}
  \label{dos3.2.1}
  Y_t = \xi - \int_t^TZ_sdW_s + \int_t^T f(s,Z_s)ds,\quad t \in  [0,T]
\end{equation}
under the following assumptions
\begin{description}
\item[(E1)] $f : \Om\times[0,T]\times\R^m\to\R$ is an adapted
measurable function continuously differentiable in the spatial
variable. There exists a positive constant $M$ such that for all
$(t,z) \in  [0, T ] \times \R^m$
\begin{align*}
  |f(t, z)| \le M(1 + |z|^2) \hbox { a.s.}\\
|\nabla_z f (t, z)| \le M (1 + |z|) \hbox { a.s.}
\end{align*}
\item[(E2)]  For each $z \in  \R^m$, $(f(t,z))_{t\in [0,T]} \in\mL
  ^{1,2p}(\R)$ for all $p\ge 1$ and its Malliavin derivative is given by
  $(D_uf(t,z))_{u,t\in [0,T]}$. For each $u,t \in[0,T]$, $z\mapsto
  D_uf(t,z)$ is continuous. There exist two positive adapted processes
  $(K_u(t))_{u,t\in [0,T ]}$ and $(\tilde K_u(t))_{u,t\in[0,T]}$
  satisfying for all $p\ge  1$
\[\int_0^T\E[\|K_u(t)\|_{\cH^{2p}}^{2p} +\|\tilde K_u(t)\|_{\cS^{2p}}^{2p}] du < \infty\]
such that for any $(u,t,z)\in [0,T]\times[0,T]\times\R^m$ we have
\[|D_uf(t,z))|\le K_u(t)(1+|z|)+\tilde K_u(t)|z|^2, \hbox { a.s}\]
\item [(E3)] $\xi$ is a $\cF_T$-adapted measurable random variable
  absolutely bounded and belongs to $\D^{1,\infty}$.
\end{description}
\begin{thm}\label{paramMalliavin}\cite{dos2010some} Theorem 3.2.3.
  Assume that $f$ and $\xi$ satisfy (E1), (E2) and (E3). Then, the
  solution process $(Y,Z)$ of \eqref{dos3.2.1} belongs to
  $\mL^{1,2}\times(\mL^{1,2})^m$, and a version of
  $(D_uY_t,D_uZ_t)_{u, t\in[0,T]}$ is the unique solution of
  \[
D_uY_t=0,\ D_uZ_t=0,\ t\in[0,u),
\]
\[D_uY_t= D_u\xi-\int_t^TD_uZ_s\ dW_s+\int_t^T[(D_uf)(s,Z_s)+\nabla_z
  f(s,Z_s)D_uZ_s]ds,\; t\in[u,T].\]
Moreover, $\{ D_tY_t;t\in[0,T]\} $, is a version of $\{ Z_t;t\in[0,T]\} $.
\end{thm}
\section{Proof of results}
\label{sec:proof-results}

Consider the McKean-Vlasov system \eqref{eq:SDE}-\eqref{eq:BSDE} under the following assumption:

\begin{assumption}
  \label{assu:A3}\quad
\begin{description}
\item [(HY0)] 
  $g:\R^d \times\cP_2(\R^d)\to \R$ is measurable
continuous and uniformly bounded by a positive constant
$K$; $F:[0,T]\times\R^d\times\R^m\times\cP_2(\R^d)\to \R$
is measurable and continuous in the spatial variables.
There exists a positive constant, $M$, such that for
all $t\in[0,T]$, $x,x'\in\R^d$, and $z,z\in\R^m$, $\mu, \mu'\in\cP_2(\R^d)$
we have
\begin{align*}
|F(t,x,z,\mu)|\leq & M(1+|z|^2),\\
|F(t,x,z,\mu)-F(t,x',z,\mu')|\leq & M(1+|z|^2)(|x-x'|+W_2(\mu,\mu')),\\
|F(t,x,z,\mu)-F(t,x,z',\mu)|\leq & M (1+|z|+|z'|)|z-z'| .
\end{align*}
\item [(HY1)] (HY0) holds, $g\in C_b^1(\R^d\times\cP_2(\R^d))$,
for each $(t,\mu)\in [0,T]\times\cP_2(\R^d)$, $F(t, \cdot, \cdot,\mu)\in C^1(\R^d\times\R^m)$ and for each $(t, x, z)\in [0,T]\times\R^d\times\R^m$,
$F(t,x, z, \cdot)$ is differentiable.
Furthermore, there exists a positive constant, $M$, such that for all
$(t,x,z,\mu,v)\in[0,T]\times\mathbb{\R}^d\times\R^m\times\R^d$ 
we have
\begin{align*}%\label{hy11}
|\nabla_xF(t,x,z,\mu)|\leq & M(1+|z|^2),\\%\label{hy12}
|\nabla_z F(t,x,z,\mu)|\leq & M(1+|z|)\\%\label{hy13}
|\partial_\mu F(t,x,z,\mu,v)|\leq & M(1+|z|^2)
\end{align*}
%$\nabla_xg$ and $\partial_\mu g$ are absolutely uniformly bounded by $K$.
\item [(HY1+)] (HY1) holds, $g\in C_b^{1,1}(\R^d\times\cP_2(\R^d))$. 
For $x,x',v,v'\in\R^d$,
  $z,z'\in\R^m$  and   $\mu,\mu'\in\cP_2(\R^d)$ we have
  \begin{equation}
    \label{hy22}
    |\nabla_{z}F(t,x,z,\mu)-\nabla_{z}F(t,x',z',\mu')|\leq M
  \{(1+|z|+|z'|)(|x-x'|+W_2(\mu,\mu'))+|z-z'|\} 
  \end{equation}
\begin{align*}%\label{hy21}
  |\nabla_xF(t,x,z,\mu)-\nabla_xF(t,x',z',\mu')|&\\
  \leq M(1+|z|+|z'|)\{ (1+|z|+|z'|)&(|x-x'|+W_2(\mu,\mu'))+|z-z'|\} 
  \\%\label{hy23}
  |\partial_\mu F(t,x,z,\mu,v)-\partial_\mu F(t,x',z',\mu',v')|&\\
  \leq M(1+|z|+|z'|)\{ (1+|z|+|z'|)&(|x-x'|+W_2(\mu,\mu')+|v-v'|)+|z-z'|\}.
  \end{align*}
\end{description}
\end{assumption}
We will use the notation $\mathbb{E}^{\omega}\left[\cdot\right]$
to emphasize that the expectation acts only on specific elements.
We denote by $\Theta^x_t=(X^x_t,Z^x_t)$ the solution of \eqref{eq:SDE}- \eqref{eq:BSDE},  and consider the following linear FBSDE system: 
\begin{align}\label{derX}
  \nabla X^x_t&=I_d+\int_0^t(\nabla_x\si(X_s)\cdot\nabla_xX_s)\cdot dW_s+\int_0^t\nabla_xb(X_s)\nabla_xX_sds,\\
  \label{derY}
\nabla Y^x_t&=\nabla_xg(X^x_T,\cL(X^x_T)) \nabla X^x_T+\E^\om[\partial_\mu g(X^x_T,\cL(X^x_T), X^x_T(\om))\nabla X^x_T(\om)]\\\nonumber
       &-\int_t^T\nabla Z^x_s dW_s+\int_t^T(\nabla_xF,\nabla_zF)(s,\Te^x_s,\cL(X^x_s)) \nabla\Te^x_sds\\
   \nonumber           \ &+\int_t^T\E^\om[\partial_\mu F(s,\Te^x_s,\cL(X^x_s), X^x_s(\om))\nabla X^x_s(\om)]ds.
\end{align}

\begin{thm}\label{TFBSDE1}
  Let Assumptions \ref{assu:A0}(i), \ref{assu:A3} hold.
  Then, for $x\in\R^d$ and $p\geq1$, 
  \eqref{eq:SDE}-\eqref{eq:BSDE} has a unique solution
%$(X^x,Y^x,Z^x)$. Moreover, for all,
$(X_.^x,Y_.^x,Z_.^x)\in\cS^{2p}(\R^d)\times\cS^{2p}(\R)\times\cH^{2p}(\R^m)$,
and the map $\R^d\to\cS^{2p}(\R^d)\times\cS^{2p}(\R)\times\cH^{2p}(\R^m)$,
$x\mapsto(X_t^x,Y_t^x,Z_t^x)$ is differentiable and the derivative is a solution of 
\eqref{derX}-\eqref{derY}.
\end{thm}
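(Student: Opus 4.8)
The plan is to exploit the decoupled structure of \eqref{eq:SDE}--\eqref{eq:BSDE}: the forward equation \eqref{eq:SDE} involves neither $Y$ nor $Z$, so $X^x$ and its law $(\cL(X_s^x))_s$ can be built first, independently of the backward component. Existence, uniqueness and the moment bounds \eqref{eq:D_1.2.3}--\eqref{eq:D_1.2.5} for $X^x$ follow from the basic SDE theorem under Assumption \ref{assu:A0}(i), and Theorem \ref{thm:c_diff} gives the classical differentiability $x\mapsto X^x$ together with the uniform estimate $\sup_x\|\nabla_xX^x\|_{\cS^p}\le C_p$ from \eqref{eq:D1.2.6}. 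Once $x$ is fixed, the curve $(\cL(X_s^x))_s$ is a \emph{deterministic} family of measures, so \eqref{eq:BSDE} reduces to a scalar BSDE with driver $(s,\om,z)\mapsto F(s,X_s^x(\om),z,\cL(X_s^x))$ and bounded terminal datum $g(X_T^x,\cL(X_T^x))$. By (HY0) this driver has quadratic growth in $z$ while the terminal condition is bounded, so the theory of \cite{kobylanski2000backward} yields a unique solution $(Y^x,Z^x)\in\cS^{2p}(\R)\times\cH^{2p}(\R^m)$; the norm estimates come from Lemma \ref{lem:Mom_bsde_lip} after linearizing the driver in $z$ and using that $Z^x\ast W$ is a BMO martingale for quadratic drivers with bounded data.

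For the differentiability assertion I would recast the problem in the parameterized form \eqref{BSDE} by setting
\[
f(s,x,z):=F(s,X_s^x,z,\cL(X_s^x)),\qquad \xi(x):=g(X_T^x,\cL(X_T^x)),
\]
and then invoke the parameterized differentiability theorem of \cite{dos2010some}. The substantive work is to verify condition (C1) for this composite data. The bounds $|f(s,x,z)|\le M(1+|z|^2)$ and $|\nabla_zf|\le M(1+|z|)$ are immediate from (HY0)--(HY1). For the spatial gradient the chain rule gives
\[
\nabla_xf(s,x,z)=\nabla_xF(s,X_s^x,z,\cL(X_s^x))\,\nabla_xX_s^x
+\E^\om\!\left[\partial_\mu F(s,X_s^x,z,\cL(X_s^x),X_s^x(\om))\,\nabla_xX_s^x(\om)\right],
\]
the second term being the Lions contribution from the $x$-dependence of the law. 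Using $|\nabla_xF|\le M(1+|z|^2)$ and $|\partial_\mu F|\le M(1+|z|^2)$ from (HY1) together with \eqref{eq:D1.2.6}, I can take the random Lipschitz constant $K_t(x)$ to be a multiple of $1+|\nabla_xX_t^x|+\E^\om|\nabla_xX_t^x(\om)|$, whose $\cS^{2p}$-norm is bounded uniformly in $x$ by \eqref{eq:D1.2.6}; the differentiability, continuity and uniform $L^{2p}$-bound of $\nabla_x\xi$ follow the same way from the $C_b^1$-regularity of $g$ in (HY1) and Theorem \ref{thm:c_diff}. Substituting these expressions into the conclusion \eqref{derYparam} of the parameterized theorem reproduces precisely the linear BSDE \eqref{derY}, while \eqref{derX} is the content of Theorem \ref{thm:c_diff}.

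I expect the main obstacle to be the rigorous justification of the Lions chain rule, namely that $x\mapsto\cL(X_s^x)$ is differentiable in the required sense and that differentiating $\mu\mapsto F(\cdots,\mu)$ along this curve yields the term $\E^\om[\partial_\mu F(\cdots,X_s^x(\om))\nabla_xX_s^x(\om)]$. The argument proceeds on an independent copy of the space by the first-order expansion
\[
F(\cdots,\cL(\xi'))-F(\cdots,\cL(\xi))=\E\!\left[\partial_\mu F(\cdots,\cL(\xi),\xi)\cdot(\xi'-\xi)\right]+o(\|\xi'-\xi\|_{L^2}),
\]
taken with $\xi=X_s^x$, $\xi'=X_s^{x+he_i}$, followed by the limit $h\to0$: the $L^2$-differentiability of $x\mapsto X_s^x$ from Theorem \ref{thm:c_diff} controls $(\xi'-\xi)/h$, and the Lipschitz regularity of $\partial_\mu F$ from (HY1+) makes the remainder uniform and licenses passing the limit inside the expectation. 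A further point requiring care is that these convergences must hold in the $\cS^{2p}\times\cH^{2p}$ norms rather than merely pointwise, which is again where the uniform-in-$x$ moment bounds of Theorem \ref{thm:c_diff} and the stability estimates behind Lemma \ref{lem:Mom_bsde_lip} enter.
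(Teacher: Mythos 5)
Your proposal is correct and follows essentially the same route as the paper: both reduce the statement to verifying condition (C1) for the composite data $\xi(x)=g(X_T^x,\cL(X_T^x))$ and $f(\om,t,x,z)=F(t,X_t^x(\om),z,\cL(X_t^x))$, compute $\nabla\xi$ and $\nabla_xf$ by the chain rule including the Lions term $\E^\om[\partial_\mu F(\cdots,X_t^x(\om))\nabla_xX_t^x(\om)]$, take $K_t(x)$ comparable to $|\nabla_xX_t^x|+\E[|\nabla_xX_t^x|]$ bounded uniformly via \eqref{eq:D1.2.6}, and then invoke the parameterized differentiability theorem of \cite{dos2010some}. Your additional remarks on the decoupled forward component and on justifying the Lions chain rule are consistent with, though not spelled out in, the paper's argument.
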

\begin{proof}
  
We will check that Assumption \ref{assu:A3} implies condition
(C1) for the terminal condition $\xi(x)=g(X^x_T, \cL(X^x_T))$ and
the driver $f:\Om\times[0,T]\times\R^d\times\R^m\to\R$ given
by $f(\om,t,x,z)=F(t,X_t^x(\om),z,\cL(X^x_t))$. Thus, we can appeal to
Theorem \ref{difDosReis} to conclude differentiability.

We start with the statement about the terminal condition.
We have that $\xi(x)$ is $\cF_T$-adapted and 
\[\nabla\xi(x)=\nabla_xg(X^x_T,\cL(X^x_T)) \nabla X^x_T+\E^\om[\partial_\mu g(X^x_T,\cL(X^x_T), X^x_T(\om))\nabla X^x_T(\om)].\]
Thus $x\mapsto\nabla\xi(x)$ is continuous and
\[|\nabla\xi(x)|\le K (|\nabla X^x_T|+\E[|\nabla X^x_T|])\le K (|\nabla X^x_T|+\|\nabla X^x\|_{\cS^p})\]
Using inequality \eqref{eq:D1.2.6} we obtain
$\sup\limits_{x\in\R^d}\|\nabla\xi(x)\|_{L^p}<\infty$ for any $p\ge 2$.

The continuity of
$x\mapsto X_t^x$ combined with (HY1) yields that $f$
and $\nabla_zf$ are continuous in $x$ and from (HY1) there is $M>0$ such that
for all $(t,x,z)\in[0,T]\times\R^d\times\R^m$ we have that
\begin{align*}
  |f(t,x,z)|\leq & M(1+|z|^2)\\
|\nabla_zf(t,x,z)|\leq & M(1+|z|).
\end{align*}
 Since
\[\nabla_xf(t,x,z)=\nabla_xF(t,X^x_t,z,\cL(X^x_t))\nabla X^x_t+
  \E^\om[\partial_\mu F(t,X^x_t,z,\cL(X^x_t), X^x_t(\om))\nabla X^x_t(\om)]\]
$(t,x,z))\mapsto\nabla_xf(t,x,z)$ is continuous and from (HY1) there is $M>0$ such that  
for all $(t,x,z)\in[0,T]\times\R^d\times\R^m$ 
\begin{align*}
  |\nabla_xf(t,x,z)|&\le M(1+|z|^2) (|\nabla X^x_t+\E[|\nabla X^x_t|])
                   \le M(1+|z|^2) (|\nabla X^x_t|+\|\nabla X^x\|_{\cS^p})\\
                 :& =K_t(x)(1+|z|^2) 
\end{align*}
  From \eqref{eq:D1.2.6}  we have that $\sup\limits_{x\in\R^d}\|K(x)\|_{\cS^p}<\infty$
for any $p\ge 2$.
\end{proof}

\begin{thm}
  [Classical differentiability]\label{TFBSDE2}
  Let Assumptions \ref{assu:A0}(i),  \ref{assu:A3} hold.
  For $x\in\R^d$
  and $p\geq1$, let $(X^x_.,Y^x_.,Z^x_.)\in\cS^{2p}(\R^d)\times\cS^{2p}(\R)\times\cH^{2p}(\R^m)$
be the solution of \eqref{eq:SDE}-\eqref{eq:BSDE}. Then, there exists a function
$\Omega\times\left[0,T\right]\times\R^d\to\R^d\times\R\times\R^m$,
$(\omega,t,x)\mapsto(X^x_t,Y^x_t,Z^x_t)(\omega)$,
 such that for almost every $\omega$, the mappings $(t,x)\mapsto X_t^x$
and $(t,x)\mapsto Y_t^x$ are continuous in $t$ and
continuously differentiable in $x$.
\end{thm}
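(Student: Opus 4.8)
The plan is to treat the forward and backward components separately, and to promote the $\cS^{2p}$-differentiability already established in Theorem~\ref{TFBSDE1} to genuine pathwise regularity by a Kolmogorov continuity argument. For the forward process there is nothing new to prove: Theorem~\ref{thm:c_diff} already furnishes a version of $(t,x)\mapsto X^x_t$ that is continuous in $t$ and continuously differentiable in $x$, with Jacobian $\nabla_xX^x$ satisfying \eqref{eq:D1.2.6}. The work therefore concentrates on $Y$. From Theorem~\ref{TFBSDE1} we know that $x\mapsto(Y^x,Z^x)$ is differentiable as a map into $\cS^{2p}(\R)\times\cH^{2p}(\R^m)$, with derivative $(\nabla Y^x,\nabla Z^x)$ solving the linear BSDE \eqref{derY}. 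The goal is to exhibit a single null set off which $x\mapsto Y^x_t(\omega)$ is classically $C^1$ with derivative $\nabla Y^x_t(\omega)$, jointly continuously in $(t,x)$.

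First I would derive H\"older-in-$x$ moment bounds for the increments of both $Y$ and its candidate derivative. Fixing $x,x'$, the difference $\delta Y:=Y^x-Y^{x'}$ solves a BSDE whose driver, after linearization of $F$ in its $z$-argument, is Lipschitz in the control variable with a random constant of the form $H_s:=M(1+|Z^x_s|+|Z^{x'}_s|)$ coming from the quadratic growth in (HY0), while its source term and terminal value are controlled by $|X^x-X^{x'}|$ and $W_2(\cL(X^x),\cL(X^{x'}))$ via (HY1+). Since $g$ is bounded, the Kobylanski a priori bound makes $Z^x\ast W$ a BMO martingale, hence so is $H\ast W$, and $\cE(H\ast W)$ enjoys the reverse-H\"older integrability required by Lemma~\ref{lem:Mom_bsde_lip}. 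Applying that lemma and feeding in the forward estimate \eqref{eq:D_1.2.5} yields
\[\E\Bigl[\sup_{t}|Y^x_t-Y^{x'}_t|^{2p}\Bigr]\le C\,|x-x'|^{2p}\]
for every $p$. Repeating the argument on the linear BSDE \eqref{derY} satisfied by $\delta(\nabla Y):=\nabla Y^x-\nabla Y^{x'}$, where the Lipschitz estimates of (HY1+) on $\nabla_xF,\nabla_zF,\partial_\mu F$ together with the forward bounds \eqref{eq:D1.2.6} and \eqref{eq:D_1.2.7} control the inhomogeneous terms, gives
\[\E\Bigl[\sup_{t}|\nabla Y^x_t-\nabla Y^{x'}_t|^{2p}\Bigr]\le C\,|x-x'|^{\beta}\]
with some $\beta>0$ growing with $p$.

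With these two estimates in hand I would invoke the Kolmogorov continuity criterion, viewing $x\mapsto Y^x_{\cdot}$ and $x\mapsto\nabla Y^x_{\cdot}$ as maps from $\R^d$ into $C([0,T])$ equipped with the sup-norm. Choosing $p$ so large that the resulting H\"older exponents exceed $d$, one obtains versions of $(t,x)\mapsto Y^x_t$ and $(t,x)\mapsto\nabla Y^x_t$ that are jointly continuous, the continuity in $t$ being already inherent in the BSDE solutions. To pass from the $\cS^{2p}$-derivative to a pathwise one, I would use the fundamental theorem of calculus in $\cS^{2p}$,
\[Y^{x'}-Y^{x}=\int_0^1\nabla Y^{x+\theta(x'-x)}\,(x'-x)\,d\theta,\]
which holds in $\cS^{2p}$ by Theorem~\ref{TFBSDE1}; since every term now admits a continuous-in-$x$ version, the identity holds $\omega$-wise off a single null set, and letting $x'\to x$ with $\nabla Y$ continuous identifies $\partial_xY^x_t(\omega)=\nabla Y^x_t(\omega)$ for a.e. $\omega$, continuously in $(t,x)$.

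I expect the quadratic growth of $F$ in $z$ to be the principal obstacle, since it excludes the elementary Lipschitz-BSDE stability estimates and forces everything through the BMO/quadratic-BSDE apparatus of Lemma~\ref{lem:Mom_bsde_lip}. The delicate points are: establishing that $Z^x\ast W$, and hence the random Lipschitz process $H\ast W$, is BMO with norm bounded uniformly in $x,x'$, so that the constant $C$ in the increment estimates does not degenerate; verifying the reverse-H\"older integrability $\cE(H\ast W)\in L^{\bar r}$; and supplying the high moments $|x-x'|^{2p\bar q^2}$ demanded by the iterated conjugate exponent $\bar q$ in Lemma~\ref{lem:Mom_bsde_lip}. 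The forward estimates \eqref{eq:D_1.2.5} and \eqref{eq:D_1.2.7} hold for every exponent, so these high moments are available; the effect of $\bar q$ is only to fix how large $p$ must be chosen before the Kolmogorov criterion can be applied to the derivative process.
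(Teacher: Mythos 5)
Your proposal follows essentially the same route as the paper: write the (linear) BSDE satisfied by $\nabla Y^x-\nabla Y^{x'}$, control its terminal condition and driver increments using (HY1+) together with the forward estimates \eqref{eq:D_1.2.5}, \eqref{eq:D1.2.6}, \eqref{eq:D_1.2.7} (the paper isolates these as Lemmas \ref{terminal} and \ref{driver}), invoke the BMO-based moment estimate of Lemma \ref{lem:Mom_bsde_lip} to get $\E[\sup_t|\nabla Y^x_t-\nabla Y^{x'}_t|^{2p}]\le C|x-x'|^p$, and conclude by Kolmogorov's continuity criterion for $p$ large. The additional identification of the pathwise derivative via the fundamental theorem of calculus in $\cS^{2p}$ is a harmless elaboration of what the paper leaves implicit.
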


In the proof we will use the following 2 Lemmas.  
\begin{lem}\label{terminal}
  Assume (HY1+) holds. For $x\in\R^d$
let $(X^x_.,Y^x_.,Z^x_.)\in\cS^{2p}(\R^d)\times\cS^{2p}(\R)\times\cH^{2p}(\R^m)$
be the solution of \eqref{eq:SDE}-\eqref{eq:BSDE}. Then, for every
$p\ge 1$ there
exists a constant $C>0$, such that for all $x,x'\in\R^d$ we have
\begin{multline*}
  \E\Big[|\nabla_xg(X^x_T,\cL(X^x_T)) \nabla X^x_T
    -\nabla_xg(X^{x'},\cL(X^{x'})) \nabla X^{x'}_T|^{2p} \Big]+\\
\E\Big[\big|\E^\om[\partial_\mu g(X^x_T,\cL(X^x_T), X^x_T(\om))\nabla X^x_T(\om)]
  -\E^\om[\partial_\mu g(X^{x'},\cL(X^{x'}_T), X^{x'}_T(\om)) \nabla X^{x'}_T(\om)]\big|^{2p}\Big]\\\le
C|x-x'|^p.
\end{multline*}
\end{lem}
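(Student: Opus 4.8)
The plan is to recall that $\nabla\xi(x)$, the gradient of $\xi(x)=g(X^x_T,\cL(X^x_T))$ computed in the proof of Theorem \ref{TFBSDE1}, is exactly the sum of the two quantities appearing in the statement, namely
\[
\nabla\xi(x)=\nabla_xg(X^x_T,\cL(X^x_T))\nabla X^x_T+\E^\om[\partial_\mu g(X^x_T,\cL(X^x_T),X^x_T(\om))\nabla X^x_T(\om)],
\]
and to estimate each summand separately, in each case splitting the increment in $x$ into a piece in which the relevant derivative of $g$ is frozen and the Jacobian $\nabla X$ varies, and a piece in which the Jacobian is frozen and the derivative of $g$ varies. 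For the first summand I would write
\begin{multline*}
\nabla_xg(X^x_T,\cL(X^x_T))\nabla X^x_T-\nabla_xg(X^{x'}_T,\cL(X^{x'}_T))\nabla X^{x'}_T\\
=\nabla_xg(X^x_T,\cL(X^x_T))(\nabla X^x_T-\nabla X^{x'}_T)\\
+\bigl(\nabla_xg(X^x_T,\cL(X^x_T))-\nabla_xg(X^{x'}_T,\cL(X^{x'}_T))\bigr)\nabla X^{x'}_T.
\end{multline*}
Under (HY1+) the derivative $\nabla_xg$ is bounded, so the $2p$-th moment of the first piece is controlled by $\E[|\nabla X^x_T-\nabla X^{x'}_T|^{2p}]$, which by \eqref{eq:D_1.2.7} applied with exponent $2p$ is at most $C|x-x'|^p$; this is precisely the piece producing the claimed rate. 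For the second piece I would use the Lipschitz bound on $\nabla_xg$ from Definition \ref{c11mu}(b) together with $W_2(\cL(X^x_T),\cL(X^{x'}_T))^2\le\E[|X^x_T-X^{x'}_T|^2]$, and then Cauchy--Schwarz to decouple the increment $|X^x_T-X^{x'}_T|+W_2(\cL(X^x_T),\cL(X^{x'}_T))$ (estimated by \eqref{eq:D_1.2.5}) from the factor $\nabla X^{x'}_T$ (bounded in every $\cS^q$ by \eqref{eq:D1.2.6}), obtaining a contribution of order $|x-x'|^{2p}$.

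The second summand is the delicate one, owing to the independent-copy structure: $\om$ indexes an independent copy of the probability space and $\E^\om$ integrates it out. I would again add and subtract, writing its increment as
\begin{multline*}
\E^\om[\partial_\mu g(X^x_T,\cL(X^x_T),X^x_T(\om))\nabla X^x_T(\om)]-\E^\om[\partial_\mu g(X^{x'}_T,\cL(X^{x'}_T),X^{x'}_T(\om))\nabla X^{x'}_T(\om)]\\
=\E^\om\bigl[\partial_\mu g(X^x_T,\cL(X^x_T),X^x_T(\om))(\nabla X^x_T(\om)-\nabla X^{x'}_T(\om))\bigr]\\
+\E^\om\bigl[\bigl(\partial_\mu g(X^x_T,\cL(X^x_T),X^x_T(\om))-\partial_\mu g(X^{x'}_T,\cL(X^{x'}_T),X^{x'}_T(\om))\bigr)\nabla X^{x'}_T(\om)\bigr].
\end{multline*}
For the first line, boundedness of $\partial_\mu g$ gives the pointwise bound $M\,\E^\om[|\nabla X^x_T(\om)-\nabla X^{x'}_T(\om)|]$, which is deterministic; by Jensen and \eqref{eq:D_1.2.7} it is of order $|x-x'|^{1/2}$, so its $2p$-th moment is again at most $C|x-x'|^p$. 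For the second line, the Lipschitz estimate on $\partial_\mu g$ from (HY1+) bounds the integrand by $M(|X^x_T-X^{x'}_T|+W_2(\cL(X^x_T),\cL(X^{x'}_T))+|X^x_T(\om)-X^{x'}_T(\om)|)|\nabla X^{x'}_T(\om)|$; the first two increments are independent of $\om$ and factor out of $\E^\om$ against the bounded quantity $\E^\om[|\nabla X^{x'}_T(\om)|]$, while the $\om$-dependent increment is handled by Cauchy--Schwarz in $\om$ using \eqref{eq:D_1.2.5} and \eqref{eq:D1.2.6}. Taking $2p$-th moments and invoking \eqref{eq:D_1.2.5} together with $W_2(\cL(X^x_T),\cL(X^{x'}_T))\le C|x-x'|$ yields a contribution of order $|x-x'|^{2p}$.

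Collecting the four pieces gives $\E[|\nabla\xi(x)-\nabla\xi(x')|^{2p}]\le C(|x-x'|^p+|x-x'|^{2p})$. For $|x-x'|\le1$ the second term is dominated by the first, while for $|x-x'|>1$ the left-hand side is uniformly bounded, since every derivative of $g$ is bounded and $\nabla X$ is bounded in all $\cS^q$-norms by \eqref{eq:D1.2.6}; hence the stated inequality $\le C|x-x'|^p$ follows after enlarging $C$. The step I expect to be the main obstacle is the bookkeeping in the $\partial_\mu g$ summand: one must track carefully which factors live on the original space and which on the copy $\om$, and apply H\"older and Cauchy--Schwarz in the copy variable to separate the merely $L^q$-bounded Jacobian factors $\nabla X(\om)$ from the Lipschitz increments of $\partial_\mu g$, since no pointwise bound on $\nabla X$ is available. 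Finally, the exponent $|x-x'|^p$ in the conclusion is forced by the H\"older-$1/2$ rate in \eqref{eq:D_1.2.7}, the slowest of the three basic SDE estimates.
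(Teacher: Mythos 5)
Your proposal is correct and follows essentially the same route as the paper's proof: the same add-and-subtract decomposition of each of the two summands, boundedness and Lipschitz continuity of $\nabla_xg$ and $\partial_\mu g$ from (HY1+), Cauchy--Schwarz in the independent-copy variable for the $\partial_\mu g$ term, and the basic SDE estimates \eqref{eq:D_1.2.5}, \eqref{eq:D1.2.6}, \eqref{eq:D_1.2.7}, with the rate $|x-x'|^p$ forced by the Jacobian-difference estimate exactly as you identify. Your explicit treatment of the case $|x-x'|>1$ when absorbing $|x-x'|^{2p}$ into $|x-x'|^p$ is a small point the paper leaves implicit.
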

\begin{proof}
  We have
  \begin{multline*}
    |\nabla_xg(X^x_T,\cL(X^x_T))-\nabla_xg(X^{x'},\cL(X^{x'}))|\\
    \le |\nabla X^{x'}_T||\nabla_xg(X^x_T,\cL(X^x_T))-\nabla_xg(X^{x'},\cL(X^{x'}_T))|+
    |\nabla_xg(X^x_T,\cL(X^x_T))||\nabla X^{x'}_T-\nabla X^x_T|,
  \end{multline*}
  as well as
  \begin{multline*}
   \E\Big[\big( |\nabla
     X^{x'}_T||\nabla_xg(X^x_T,\cL(X^x_T))-\nabla_xg(X^{x'},\cL(X^{x'}))|\big)^{2p}\Big]\\\le
     M^{2p}\E\Big[ |\nabla
       X^{x'}_T|^{2p}\big(|X^x_T-X^{x'}_T|+W_2(\cL(X^x),\cL(X^{x'}))|\big)^{2p}\Big]\le C|x-x'|^{2p}.
     \end{multline*}
    and
   \[\E\Big[ |\nabla_xg(X^x_T,\cL(X^x_T))|^{2p}|\nabla X^{x'}_T-\nabla X^x_T|^{2p}\Big]
     \le K^{2p}\E\big[|\nabla X^{x'}_T-\nabla X^x_T|^{2p}\big]\le C|x-x'|^p.\]
Also
   \begin{multline*}
     \big|\E^\om[\partial_\mu g(X^x_T,\cL(X^x_T), X^x_T(\om))\nabla X^x_T(\om)]
  -\E^\om[\partial_\mu g(X^{x'},\cL(X^{x'}), X^{x'}_T(\om)) \nabla X^{x'}_T(\om)]\big|\\\le
 \big|\E^\om[(\partial_\mu g(X^{x'}_T,\cL(X^{x'}_T), X^{x'}_T(\om))
  -\partial_\mu g(X^x_T,\cL(X^x_T), X^x_T(\om)))\nabla X^{x'}_T(\om)]\big|\\+
  \big|\E^\om[\partial_\mu g(X^x_T,\cL(X^x_T), X^x_T(\om))(\nabla X^{x'}_T(\om)
  -\nabla X^x_T(\om))]\big|\\
\le M\E^\om\big[(|X^{x'}_T-X^x_T|+W_2(\cL(X^x_T),\cL(X^{x'}_T))+|X^{x'}_T(\om)-X^x_T(\om)|) |\nabla X^{x'}_T(\om)|\big]\\
+K \E^\om\big[|\nabla X^{x'}_T(\om)-\nabla X^x_T(\om)|\big]\le C
(|X^{x'}_T-X^x_T|+|x-x'|+|x-x'|^\frac12),
\end{multline*}
and thus
\begin{multline*}
  \E\Big[\big|\E^\om[\partial_\mu g(X^x_T,\cL(X^x_T), X^x_T(\om))\nabla X^x_T(\om)]
  -\E^\om[\partial_\mu g(X^{x'},\cL(X^{x'}_T), X^{x'}_T(\om)) \nabla X^{x'}_T(\om)]\big|^{2p}\Big]\\\le
C\big[\E[|X^{x'}_T-X^x_T|^{2p}]+|x-x'|^{2p}+|x-x'|^p\big]\le C[|x-x'|^{2p}+|x-x'|^p].
\end{multline*}
\end{proof}
\begin{lem}\label{driver}
  Assume (HY1+) holds. For $x\in\R^d$
let $(X^x,Y^x,Z^x)\in\cS^{2p}(\R^d)\times\cS^{2p}(\R)\times\cH^{2p}(\R^m)$
be the solution of \eqref{eq:SDE}-\eqref{eq:BSDE} and $(\nabla X^x,\nabla
Y^x,\nabla Z^x)$ be the solution of \eqref{derX}-\eqref{derY}.
Then, for every
$p\ge 1$ there exists a constant $C>0$, such that for all $x,x'\in\R^d$ we have
\begin{align*}
  \E\Big[\Big(\int_0^T&|\nabla_xF(s,\Te^x_s,\cL(X^x_s)) \nabla X^x_s
    -\nabla_xF(s,\Te_s ^{x'},\cL(X^{x'}_s)) \nabla X^{x'}_s|ds\Big)^{2p} \Big]+\\
\E\Big[\Big(\int_0^T&\big|\E^\om[\partial_\mu F(s,\Te^x_s,\cL(X^x_s), X^x_s(\om))\nabla X^x_s(\om)]
  \\&-\E^\om[\partial_\mu F(s,\Te^{x'},\cL(X^{x'}_s), X^{x'}_s(\om)) \nabla X^{x'}_s(\om)]\big|)ds\Big)^{2p}\Big]\le
C|x-x'|^p
\end{align*}
\end{lem}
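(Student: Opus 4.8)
The plan is to mirror the proof of Lemma~\ref{terminal}, now handling the two driver contributions by telescoping each difference and then applying the Lipschitz estimates of (HY1+) together with the quadratic growth bounds of (HY1). Throughout I write $\Te^x_s=(X^x_s,Z^x_s)$. I would first treat the $\nabla_x F$ term. Adding and subtracting $\nabla_xF(s,\Te^{x'}_s,\cL(X^{x'}_s))\nabla X^x_s$ splits its integrand as
\[
[\nabla_xF(s,\Te^x_s,\cL(X^x_s))-\nabla_xF(s,\Te^{x'}_s,\cL(X^{x'}_s))]\nabla X^x_s
+\nabla_xF(s,\Te^{x'}_s,\cL(X^{x'}_s))[\nabla X^x_s-\nabla X^{x'}_s].
\]
For the first summand I apply the Lipschitz bound for $\nabla_x F$ in (HY1+), producing the factors $(1+|Z^x_s|+|Z^{x'}_s|)^2(|X^x_s-X^{x'}_s|+W_2(\cL(X^x_s),\cL(X^{x'}_s)))$ and $(1+|Z^x_s|+|Z^{x'}_s|)|Z^x_s-Z^{x'}_s|$, each multiplied by $|\nabla X^x_s|$; for the second summand I use the growth bound $|\nabla_xF|\le M(1+|Z^{x'}_s|^2)$ against $|\nabla X^x_s-\nabla X^{x'}_s|$.

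After raising to the $2p$-th power and taking expectation, I control each piece by H\"older's inequality in the product measure $ds\otimes dP$, splitting off the $Z$-dependent factors. The stability inputs I invoke are \eqref{eq:D_1.2.5}, giving $\E[\sup_s|X^x_s-X^{x'}_s|^q]\le C|x-x'|^q$ together with $W_2(\cL(X^x_s),\cL(X^{x'}_s))^2\le\E|X^x_s-X^{x'}_s|^2\le C|x-x'|^2$; the uniform bound \eqref{eq:D1.2.6} for $\|\nabla X^x\|_{\cS^q}$; and the derivative stability \eqref{eq:D_1.2.7}, which gives $\E[\sup_s|\nabla X^x_s-\nabla X^{x'}_s|^q]\le C|x-x'|^{q/2}$ and is precisely what produces the exponent $|x-x'|^p$ on the right-hand side. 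The powers of $Z$ are absorbed using that $Z^x*W$ is a BMO martingale, so by Lemma~\ref{lem:Mom_bsde_lip} all moments $\E[(\int_0^T|Z^x_s|^2ds)^q]$ are finite uniformly in $x$.

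The $\partial_\mu F$ term is handled analogously to the second half of Lemma~\ref{terminal}: I telescope inside the independent-copy expectation as
\[
\E^\om[(\partial_\mu F(s,\Te^x_s,\cL(X^x_s),X^x_s(\om))-\partial_\mu F(s,\Te^{x'}_s,\cL(X^{x'}_s),X^{x'}_s(\om)))\nabla X^x_s(\om)]
\]
plus $\E^\om[\partial_\mu F(s,\Te^{x'}_s,\cL(X^{x'}_s),X^{x'}_s(\om))(\nabla X^x_s(\om)-\nabla X^{x'}_s(\om))]$, apply the $\partial_\mu F$ Lipschitz estimate of (HY1+) and the growth bound $|\partial_\mu F|\le M(1+|Z^{x'}_s|^2)$, and integrate the independent copy using $\E^\om|X^x_s(\om)-X^{x'}_s(\om)|\le C|x-x'|$ and the corresponding $\nabla X$ estimate from \eqref{eq:D_1.2.7}. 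Fubini lets me move $\E^\om$ through the time integral, and H\"older in the outer probability space closes the bound as before.

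The main obstacle is the quadratic growth of the estimates in $Z$. The products $(1+|Z^x_s|+|Z^{x'}_s|)^2$ are handled by the uniform higher-moment control on $\int_0^T|Z|^2ds$ from the BMO property and Lemma~\ref{lem:Mom_bsde_lip}, paired with the $\cS^q$-bounds for $X$ and $\nabla X$ via H\"older. The genuinely delicate factor is the $Z$-difference term $(1+|Z^x_s|+|Z^{x'}_s|)|Z^x_s-Z^{x'}_s|$, which forces a stability estimate for $Z^x-Z^{x'}$: writing the BSDE satisfied by $Y^x-Y^{x'}$ and applying the a priori $\cH^{2q}$-estimate of Lemma~\ref{lem:Mom_bsde_lip} (using the Lipschitz terminal datum and driver) yields $\|Z^x-Z^{x'}\|_{\cH^{2q}}\le C|x-x'|$, after which one more application of H\"older against the BMO moment bounds for $Z$ closes the estimate at the stated exponent $|x-x'|^p$.
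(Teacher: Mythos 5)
Your proposal matches the paper's proof in all essentials: the same telescoping of each product (differing only in which of $x,x'$ carries the Lipschitz factor), the same use of the (HY1+) Lipschitz and (HY1) growth bounds, H\"older against the estimates \eqref{eq:D_1.2.5}, \eqref{eq:D1.2.6}, \eqref{eq:D_1.2.7}, and the same identification of \eqref{eq:D_1.2.7} as the source of the exponent $|x-x'|^p$. If anything, you are more explicit than the paper about the one nontrivial input, the stability bound $\|Z^x-Z^{x'}\|_{\cH^{2q}}\le C|x-x'|$, which the paper invokes silently.
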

\begin{proof}
  We have
  \begin{multline*}
 |\nabla_xF(s,\Te^x_s,\cL(X^x_s))\nabla X^x_s-\nabla_xF(s,\Te_s ^{x'},\cL(X^{x'}_s))\nabla X^{x'}_s|\\\le
|\nabla_xF(s,\Te^x_s,\cL(X^x_s))-\nabla_xF(s,\Te_s ^{x'},\cL(X^{x'}_s))||\nabla X^{x'}_s|+\\
|\nabla_xF(s,\Te^x_s,\cL(X^x_s))||\nabla X^x_s-\nabla X^{x'}_s|,
\end{multline*}
as well as
\begin{multline*}
 \E\Big[\Big(\int_0^T|\nabla_xF(s,\Te^x_s,\cL(X^x_s))
    -\nabla_xF(s,\Te_s ^{x'},\cL(X^{x'}_s))||\nabla
    X^{x'}_s|\Big)^{2p}\Big]\\\le C
\|\nabla
X^x\|_{\cS^{6p}}^{2p}\{\|1+|Z^x|+|Z^{x'}|\|_{\cH^{12p}}^{4p}\|X^x-X^{x'}\|_{\cS^{6p}}^{2p}\\+\|1+|Z^x|+|Z^{x'}|\|_{\cH^{4p}}^{4p}\sup_{s\in[0,T]}W_2(\cL(X^x_s), \cL(X^{x'}_s))
+\|1+|Z^x|+|Z^{x'}|\|_{\cH^{6p}}^{2p}\|Z^x-Z^{x'}\|_{\cH^{6p}}^{2p}\}\\\le C|x-x'|^{2p},
  \end{multline*}
and
  \begin{multline*}
    \E\Big[\Big(\int_0^T |\nabla_xF(s,\Te^x_s,\cL(X^x_s))||\nabla
    X^x_s-\nabla X^{x'}_s|ds\Big)^{2p}\Big]\\\le C
\E\Big[\sup_{s\in[0,T]}|\nabla X^x_s-\nabla X^{x'}_s|^{2p}
\Big(\int_0^T(1+|Z^x|^2)ds\Big)^p\Big]\\\le C
\|\nabla X^x-\nabla X^{x'}\|_{\cS^{6p}}^{2p}(1+\|Z^{x'}\|_{\cH^{8p}}^{4p})
\le C|x-x'|^p.
\end{multline*}
Also
\begin{multline*}
\big|\E^\om[\partial_\mu F(s,\Te^x_s,\cL(X^x_s), X^x_s(\om))\nabla X^x_s(\om)]
 -\E^\om[\partial_\mu F(s,\Te^{x'},\cL(X^{x'}_s), X^{x'}_s(\om))\nabla X^{x'}_s(\om)]\big|\\\le
  \E^\om[|\partial_\mu F(s,\Te^x_s,\cL(X^x_s), X^x_s(\om))
  -\partial_\mu F(s,\Te^{x'},\cL(X^{x'}_s), X^{x'}_s(\om))||\nabla X^{x'}_s(\om)|]
+\\ \E^\om[|\partial_\mu F(s,\Te^x_s,\cL(X^x_s), X^x_s(\om))||\nabla X^x_s(\om)-\nabla X^{x'}_s(\om)|]\\\le
M (1+|Z^x_s|+|Z^{x'}_s|)\E^\om\big[|\nabla X^{x'}_s(\om)|\\
  \{(1+|Z^x_s|+|Z^{x'}_s|)(|X^x_s-X^{x'}_s|+W_2(\cL(X^x_s),\cL(X^{x'}_s))
  +|X^x_s(\om)-X^{x'}_s(\om)|)+|Z^x_s-Z^{x'}_s|\}\big]\\
  +K (1+|Z^x_s|^2)\E^\om\big[|\nabla X^{x'}_s(\om)-\nabla X^x_s(\om)|\big]\\
 \le C (1+|Z^x_s|+|Z^{x'}_s|)\sup_x\|\nabla X^x\|_{\cS^2}\\
\{(1+|Z^x_s|+|Z^{x'}_s|)(|X^x_s-X^{x'}_s|+\E[\sup_{s\in[0,T]}|X^x_s-X^{x'}_s|])+|Z^x_s-Z^{x'}_s|\}\\+
C (1+|Z^x_s|^2)\E\big[\sup_{s\in[0,T]}|\nabla X^x_s-\nabla X^{x'}_s|^2\big]^\frac12
 \\
\le C (1+|Z^x_s|+|Z^{x'}_s|) \{(1+|Z^x_s|+|Z^{x'}_s|)(|X^x_s-X^{x'}_s|+|x-x'|)+|Z^x_s-Z^{x'}_s|\}\\
+C(1+|Z^x_s|^2)|x-x'|^\frac12,
\end{multline*}
and therefore
\begin{multline*}
  \E\Big[\Big(\int_0^T\big|\E^\om[\partial_\mu F(s,\Te^x_s,\cL(X^x_s), X^x_s(\om))\nabla X^x_s(\om)]
  \\-\E^\om[\partial_\mu F(s,\Te^{x'},\cL(X^{x'}_s), X^{x'}_s(\om)) \nabla X^{x'}_s(\om)]\big|)ds\Big)^{2p}\Big]\\\le
C  \E\Big[\Big(\int_0^T (1+|Z^x_s|+|Z^{x'}_s|)^2ds\Big)^{2p}\sup_{s\in[0,T]}|X^x_s-X^{x'}_s|^{2p}\Big]\\+C|x-x'|^{2p}\E\Big[\Big(\int_0^T (1+|Z^x_s|+|Z^{x'}_s|)^2ds\Big)^{2p}\Big]\\+C\E\Big[\Big(\int_0^T (1+|Z^x_s|+|Z^{x'}_s|)^2ds\Big)^p\Big(\int_0^T (|Z^x_s-Z^{x'}_s|)^2ds\Big)^p\Big]\\
+C|x'-x|^p\E\Big[\Big(1+\int_0^T Z^x_s|^2ds)\Big)^{2p}\\
\le C \big(
\|1+|Z^x|+|Z^{x'}|\|_{\cH^{8p}}^{4p}\|X^x-X^{x'}\|_{\cS^{4p}}^{2p}+\|1+|Z^x|+|Z^{x'}|\|_{\cH^{4p}}^{4p}|x'-x|^{2p}\big)\\
+ C \big(\|1+|Z^x|+|Z^{x'}|\|_{\cH^{4p}}^{2p}\|Z^x-Z^{x'}\|_{\cH^{4p}}^{2p}+(1+\|Z^{x'}\|_{\cH^{8p}}^{4p})|x'-x|^p\big)\\\le
C(|x-x'|^{2p}+|x-x'|^p)\le C|x-x'|^p
  \end{multline*}
\end{proof}
\begin{proof}[Proof of Theorem \ref{TFBSDE2}].
We now use Lemas \ref{terminal}, \ref{driver} and the Kolmogorov's continuity criterion
to prove the continuity of $x\mapsto\nabla_x Y^x_t$.
Let $p\ge 1$, $t \in[0,T]$. Take $x,x' \in\R^d$ and let  $(X^x,
Y^x,Z^x)$, $(X^{x'}, Y^{x'},Z^{x'})$ be
the solutions of FBSDE \eqref{eq:SDE}-\eqref{eq:BSDE} with parameters $x$ and $x'$
respectively. Now define $\de\nabla X = \nabla X^x-\nabla X^{x'}$,
$\de\nabla Y = \nabla Y^x-\nabla Y^{x'}$ and $\de\nabla Z = \nabla Z^x-\nabla Z^{x'}$.
We apply Kolmogorov's continuity criterion to
the difference $|\nabla Y^x - \nabla Y^{x'}|$.
We can write a BSDE for the differences $\de\nabla Y_t$, $\de\nabla Z_t$,
$t\in[0,T]$,
\begin{multline*}
    \de\nabla Y_t=\nabla_xg(X^x_T,\cL(X^x_T))-\nabla_xg(X^{x'},\cL(X^{x'}))
  +\E^\om[\partial_\mu g(X^x_T,\cL(X^x_T), X^x_T(\om))\nabla X^x_T(\om)]\\
  -\E^\om[\partial_\mu g(X^{x'}_T,\cL(X^{x'}_T), X^{x'}_T(\om))\nabla X^{x'}_T(\om)]-\int_t^T\de\nabla Z_s dW_s\\
  +\int_t^T[\nabla_xF(s,\Te^x_s,\cL(X^x_s))\nabla X^x_s- \nabla_xF(s,\Te^{x'}_s,\cL(X^{x'}_s))\nabla X^{x'}_s]ds\\
  +\int_t^T[(\nabla_zF(s,\Te^x_s,\cL(X^x_s))-\nabla_zF(s,\Te^{x'}_s,\cL(X^{x'}_s)))\nabla Z^x_s
  +\nabla_zF(s,\Te^{x'}_s,\cL(X^{x'}_s))\de\nabla Z^x_s]ds\\
  +  \int_t^T\E^\om[\partial_\mu F(s,\Te^x_s,\cL(X^x_s),X^x_s(\om))\nabla X^x_s(\om)
   -\partial_\mu F(s,\Te^{x'}_s,\cL(X^{x'}_s), X^{x'}_s(\om))\nabla X^{x'}_s(\om)]ds
 \end{multline*}
 We now apply Lemma \ref{lem:Mom_bsde_lip} to this BSDE to obtain
 \begin{multline*}
   \E[\sup_{t\in[0,T]}|\nabla Y_t^x-\nabla Y_t^{x'}|^{2p}]\\
\le C \Big\{\E \Big[|\nabla_xg(X^x_T,\cL(X^x_T)) \nabla X^x_T -\nabla_xg(X^{x'},\cL(X^{x'})) \nabla X^{x'}_T|^{2p\bar q^2} \\
+\big|\E^\om[\partial_\mu g(X^x_T,\cL(X^x_T), X^x_T(\om))\nabla X^x_T(\om)
  -\partial_\mu g(X^{x'},\cL(X^{x'}_T), X^{x'}_T(\om)) \nabla X^{x'}_T(\om)]\big|^{2p\bar q^2}\\
+\Big(\int_0^T(|\nabla_xF(s,\Te^x_s,\cL(X^x_s)) \nabla X^x_s
    -\nabla_xF(s,\Te_s ^{x'},\cL(X^{x'}_s)) \nabla X^{x'}_s|+\\
\big|\E^\om[\partial_\mu F(s,\Te^x_s,\cL(X^x_s), X^x_s(\om))\nabla X^x_s(\om)
  -\partial_\mu F(s,\Te^{x'},\cL(X^{x'}_s), X^{x'}_s(\om)) \nabla
  X^{x'}_s(\om)]\big|\\+|\nabla_zF(s,\Te^x_s,\cL(X^x_s))-\nabla_zF(s,\Te^{x'}_s,\cL(X^{x'}_s))||\nabla Z^x_s|
  )ds\Big)^{2p\bar q^2}\Big]^\frac 1{\bar q^2}\Big\}
\end{multline*}
Applying Lemmas \ref{terminal} and \ref{driver} we have that there
is $C>0$ such that
\[ \E\Big[\sup_{t\in[0,T]}|\nabla Y_t^x-\nabla Y_t^{x'}|^{2p}\Big]\le C|x-x'|^p.\]
Choosing $p$ large enough and applying Kolmogorov's continuity
criterion we obtaning the continuity of $x\mapsto \nabla Y^x$.
\end{proof}

\begin{thm}[Malliavin differentiability]\label{mainMalliavin}
  Let %$g:\R^d\to\R$ be smooth and
  Assumptions \ref{assu:A0}(i), \ref{assu:A3} hold.
  For $x\in\R^d$, let $(X^x,Y^x,Z^x)$ be
 the unique solution of \eqref{eq:SDE})-\eqref{eq:BSDE}. Then, for
 any $t\in\left[0,T\right]$, $x\in\R^d$, $(Y^x,Z^x)\in\mL^{1,2}\times(\mL^{1,2})^m$
and a version of $\{(D_u^iY_t^x,D_u^iZ_t^x) :0\leq u,t\le T\}$, $i=1,\ldots,m$,
is the unique solution of 
\[
D_uY_t^x=0,~D_uZ_t^x=0,~t\in[0,u),
\]
\begin{align}\nonumber
D_uY_t^x&=  \nabla_xg(X^x_T,\cL(X^x_T)) D_uX_T^x-\int_t^TD_uZ_s^xdW_s\\
 & +\int_t^T(\nabla_x F, \nabla_z F)(s, X_s^x,Z_s^x,\cL(X_s^x))(D_uX_s^x,D_uZ_s^x)\
   ds,\quad t\in[u,T]. \label{DYBSDE}
\end{align}
Furthermore, $\{ D_tY_t^x;t\in[0,T]\} $,
defined by the last equation, is a version of $\{ Z_t^x;t\in[0,T]\} $.
\end{thm}
%On the other hand, the following set of equations hold for any $x\in\R^d$
%and $0\leq u\leq t\leq T$, $P-$almost surely,
%\begin{align*}
%D_uX_t^x= & \nabla_xX_t^x(\nabla_xX_u^x)^{-1}\si(X_u^x),\\
%D_uY_t^x= & \nabla_xY_t^x(\nabla_xX_u^x)^{-1}\si(X_u^x),\\
%Z_t^x= & \nabla_xY_t^x(\nabla_xX_t^x)^{-1}\si(X_t^x),
%\end{align*}
%and $D_uZ_t^x=\nabla_xZ_t^x(\nabla_xX_u^x)^{-1}\si(X_u^x)$
%for almost all $(\omega,u,t)\in\Omega\times[0,T]\times\left[0,T\right]$
%with $0\leq u\leq t\leq T$.

\begin{proof}For fixed $x\in\R^m$ define $f:\Om\times\R^m$ by
  $f(\om,t,z)=F(t,X_t^x(\om),z,\cL(X_t^x))$,
  \[\nabla f(t,z)=\nabla_zF (t,X_t^x,z,\cL(X_t^x)).\]
  From (HY1) we get (E1) and moreover there is $M>0$ such that
  \[|D_uf(t,z)|=|\nabla_xF (t,X_t^x,z,\cL(X_t^x))D_uX_t^x|\le M
    |D_uX_t^x|(1+|z|^2)\ \hbox{ a.s.}\]
By (\ref{eq:M_der_bound}), $\sup\limits_{u\in[0,T]}\|D_uX^x\|_{\cS^{2p}}<\infty$ for any
$p\ge 1$ and $x\in\R^d$ and then we obtain (E2).

Letting $\xi(x)=g(X^x_T, \cL(X^x_T))$ for $x\in\R^m$, we have that $\xi(x)$ is
$\cF_T$-adapted and 
\[|D_u\xi(x)|=|\nabla_xg(X^x_T,\cL(X^x_T))  D_uX^x_T|\le K  |D_uX^x_T|.\]
By Theorem \ref{thm:Malliavin} and inequality \eqref{eq:D_1.2.3} we
have for any $p\ge 2$ 
\[\sup_{u\in [0,T ]} C\E[(1+|X_T^x|)^p |D_uX_T^x|^p] \le
C\E[(1+|X_T^x|)^{2p}]^\frac 12\sup_{u\in [0,T ]}\E[|D_uX_T^x|^{2p}]^\frac 12<\infty\]
and we obtain (E3). Applying Theorem \ref{paramMalliavin} we get the
first part of the Theorem.

For the second part of the Theorem fix $x\in\R^d$ and $0\le u\le t\le T$.
The representation formula for $D_uX^x$ is given by
(\ref{eq:rep1.2.10}). From Theorem \ref{paramMalliavin} we have that
$\{ D_tY_t;t\in[0,T]\} $, is a version of $\{ Z_t;t\in[0,T]\} $.
\end{proof}

%\section{Proof of Theorem \ref{thm:weak_MFG}} \label{sec:main}
\begin{prop}
\label{prop:Exist}Under Assumptions \ref{assu:A0} and \ref{assu:A1},
there exists a unique solution
$(X^{t,x}_s,Y^{t,x}_s,Z^{t,x}_s)\in\cS^2(\R^d)\times\cS^2(\R)\times\cH^2(\R^m)$ 
of \eqref{eq:Str_MFG} such that $(\cL(X^{t,x}_s)$
is the unique equilibrium of the MFG associated with the stochastic
optimal control problem \eqref{eq:MFG1}-\eqref{eq:MFG2}. 
\end{prop}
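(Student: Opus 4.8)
The plan is to recast the McKean--Vlasov system \eqref{eq:Str_MFG} as a fixed-point problem over flows of measures. Fix a continuous flow $\mu=(\mu_s)_{t\le s\le T}$ in $C([t,T];\cP_2(\R^d))$ and replace every occurrence of $\cL(X_s)$ in \eqref{eq:MFG1}--\eqref{eq:MFG2} by the frozen $\mu_s$; this yields a \emph{standard} stochastic control problem. Assumption \ref{assu:A1}, through the derived properties (a)--(d) of $L$, guarantees that $L(x,\cdot,\mu)$ is strongly convex with quadratic growth, so the minimizer $\hat\alpha(x,z,\mu)$ is well defined, of class $C^3$, and grows at most linearly in $z$. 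I would solve this frozen problem, record the optimal state process $X^\mu$, and define the decoupling map $\Phi(\mu):=(\cL(X^\mu_s))_{t\le s\le T}$. An MFG equilibrium is exactly a fixed point of $\Phi$, and to it corresponds a solution $(X,Y,Z)$ of \eqref{eq:Str_MFG} in the stated spaces.

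The first substantive step is solvability of the frozen problem, which upon freezing the measure argument in \eqref{eq:Str_MFG} is a coupled forward--backward system whose backward driver $-L(x,\hat\alpha(x,z,\mu),\mu)$ has quadratic growth in $z$ (since $\hat\alpha$ is linear in $z$ and $L$ quadratic in $a$). I would solve it through the convex structure of the underlying control problem --- equivalently, the decoupling field $u^\mu$ solving the associated (hypoelliptic) HJB equation --- invoking Kobylanski's theory of quadratic BSDEs \cite{kobylanski2000backward} and the Lipschitz, at most linearly growing terminal datum $g$ from Assumption \ref{assu:A0}(ii); strong convexity of $L$ yields a unique optimal control and hence a unique $(X^\mu,Y^\mu,Z^\mu)\in\cS^2(\R^d)\times\cS^2(\R)\times\cH^2(\R^m)$. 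Crucially, I would extract moment estimates for $X^\mu$ (as in \eqref{eq:D_1.2.3}--\eqref{eq:D_1.2.4}) together with $\cH^2$/BMO bounds for $Z^\mu$ that are \emph{uniform} over frozen flows ranging in a bounded subset of $C([t,T];\cP_2(\R^d))$; these uniform bounds are what feed the compactness argument.

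Next I would prove existence of a fixed point by a Schauder argument over flows. Using the uniform moment bounds together with a uniform higher-moment estimate (to obtain uniform integrability, hence relative $W_2$-compactness pointwise in $s$) and the time-increment estimate \eqref{eq:D_1.2.4} (to obtain equicontinuity of $s\mapsto\Phi(\mu)_s$ in $W_2$), I would exhibit a convex set of flows that is mapped into itself by $\Phi$ and is relatively compact for uniform convergence in $W_2$ by Arzel\`a--Ascoli. Continuity of $\Phi$ would follow from stability of the frozen forward--backward system under perturbations of $\mu$ in $W_2$, using the same uniform quadratic-BSDE estimates. Schauder's fixed-point theorem then produces an equilibrium flow and hence a solution of \eqref{eq:Str_MFG}.

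Uniqueness is where the Lasry--Lions monotonicity of $g$ and $L(\cdot,a,\cdot)$ (Assumption \ref{assu:A0}(iii)) enters, and I expect the quadratic growth of $L$ to make this the main obstacle. Given two equilibria with flows $\mu^1,\mu^2$ and associated $(X^i,Y^i,Z^i)$, I would run the classical Lasry--Lions computation, applying It\^o's formula to the pairing of the difference of the forward states against the difference of the backward processes across the two systems. The strong convexity of $L$ in $a$ generates a nonnegative term in $\hat\alpha^1-\hat\alpha^2$, while the monotonicity in the measure variable forces the cross terms coming from $\mu^1-\mu^2$ to have the correct sign; combining these forces $\mu^1=\mu^2$ and then $(X^1,Y^1,Z^1)=(X^2,Y^2,Z^2)$. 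The delicate point, and the crux of the whole argument, is integrability: since $L$ grows quadratically in the control, the processes $Z^i$ are only square-integrable with BMO structure rather than bounded, so every expectation appearing in the monotonicity estimate must be kept finite by careful use of the BMO and higher-moment bounds obtained in the first two steps.
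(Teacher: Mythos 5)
Your proposal is correct and follows essentially the same route as the paper: the paper's proof of Proposition \ref{prop:Exist} consists only of citing Theorem 4.44, Remark 4.50 and Theorem 3.29 of \cite{carmona2018probabilistic}, and those results are established by precisely the scheme you outline --- freezing the measure flow, solving the resulting control problem with quadratic driver, obtaining an equilibrium via a Schauder fixed-point argument over flows in $C([t,T];\cP_2(\R^d))$, and deducing uniqueness from the Lasry--Lions monotonicity computation. In effect you have unpacked the citation rather than diverged from it, so there is no discrepancy to report.
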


\begin{proof}
Theorem 4.44 along with remark 4.50 in \cite{carmona2018probabilistic}
ensure that \eqref{eq:Str_MFG} is {\em solvable}. Uniqueness of the
associated MFG problem follows from \cite[Theorem 3.29]{carmona2018probabilistic}.
\end{proof}
\begin{proof}[Proof of Theorem \ref{thm:weak_MFG}]
%Item (1) is precisely our Theorem \ref{TFBSDE2}.
  Let
\[\hat{\af}(x,z,\mu)=\underset{a\in A}{\arg\min}\ L(x,a,\mu)-a\cdot z,\]
and
\[  H(x,z,\mu)=
  \inf\limits_{a\in A}\ L\left(x,a,\mu\right)-a\cdot z.\]
Under our assumptions, $\hat{\alpha}(x,z)$ is the unique solution of
$\nabla_{a}L(x,\hat{\af},\mu)=z$, i.e., for each $x,\mu$, $\hat{\af}(x,\cdot)$
is the inverse function of $\zeta(x,\cdot,\mu)=\nabla_{a}L(x,\cdot,\mu)$.
Furthermore, by the implicit function theorem, $\hat\af(x,z,\mu)$
is continuously differentiable in all its arguments and property (d) of $L$ implies that
$|\hat\af(x,z,\mu)|\le C(1+|z|)$. Moreover denoting
$G=(D_{aa}^2L)^{-1}$ and writing $\hat\af$ for $\hat{\alpha}(x,z,\mu)$
\begin{align*}
  D_x\hat\af &=-G(x,\hat\af,\mu) D_{xa}^2L(x,\hat\af,\mu)\\
D_z\hat\af=&G(x,\hat\af,\mu)\\
  \partial_\mu\hat{\af}(x,z,\mu,v) &=-G(x,\hat\af,\mu)\partial_\mu(\nabla_aL)(x,\hat\af,\mu,v) 
\end{align*}
Thus
\begin{align*}
  |D_x\hat\af| &\le C(1+|\hat\af|)\le C(1+|z|),\\
|D_z\hat\af|&\le C,\\
| \partial_\mu\hat{\af}(x,z,\mu,v)|&\le C(1+|\hat\af|)\le C(1+|z|)
\end{align*}
which imply
\[|\hat\af(x',z',\mu')-\hat\af(x,z,\mu)|\le C\{(1+|z|+|z'|)(|x'-x|+W_2(\mu',\mu))+|z-z'|\}\]
Let $X_s,Y_s,Z_s$ be the processes given in Proposition \ref{prop:Exist} and
$\hat{\alpha}_s=\hat{\alpha}\left(X_s,Z_s\right)$. Since
the process $Z\ast W$
%\[\left(\int_t^{\tau}Z_s\cdot dW_s\right)_{t\leq\tau\leq T}\]
is a BMO martingale (see \cite[Theorem 4.19]{carmona2018probabilistic}),
so is $\hat\af\ast W$. By \cite[Proposition 4.18]{carmona2018probabilistic},
the stochastic exponential $\cE(\hat\af\ast W)$ is a true martingale. Therefore, we can use Girsanov's theorem to
define the $m-$dimensional Brownian motion
\[
\tilde{W}_s=W_s-W_t+\int_t^s\hat{\alpha}_{r}dr.
\]

One proves as in Proposition \ref{prop:Exist} the existence of a
solution $(X,Y,Z)$  to \eqref{eq:weak_mfg2}. 

Let $F(x,z,\mu):=L(x,\hat{\alpha}(x,z,\mu),\mu)$, 
and consider the system
\begin{equation}\label{eq:weak_mfg2-b}
  \begin{cases}
    dX_s=b(X_s)ds+\si(X_s)dW_s \\
dY_s=-F(X_s,Z_s,\cL(X_s))ds+ Z_s\cdot   dW_s
\end{cases}
\end{equation}
for $s\in\left[t,T\right], X_t=x, Y_T=g(X_T,\cL(X_T)). $
We will show that Assumption \ref{assu:A3} holds, which allows us to apply Theorems \ref{TFBSDE1}, \ref{TFBSDE2}, and \ref{mainMalliavin} and, therefore, obtain the conclusions of Theorem \ref{thm:weak_MFG}. In the following estimates, $C$ denotes a constant that may vary from line to line. Property (a) of $L$ implies
$|F(x,0,\mu)|\le C$.

\begin{equation}
  \nabla_{z}F(x,z,\mu)=\nabla_aL(x,\hat\af,\mu)\nabla_z\hat\af
  =z^TG(x,\hat\af,\mu),\label{eq:D_eta}
\end{equation}
so that
\[\nabla_{z}F(x,z,\mu)D_{aa}^2L(x,\hat\af,\mu)=z\]
\begin{align*}
  \nabla_xF(x,z,\mu)&=\nabla_xL(x,\hat\af,\mu)+\nabla_aL(x,\hat\af,\mu)\nabla_x\hat\af\\
  &=\nabla_xL(x,\hat\af,\mu) -\nabla_zF(x,z,\mu)\nabla_{xa}^2L(x,\hat\af,\mu),\\
    \partial_\mu F(x,z,\mu,v)&=\partial_\mu L(x,\hat\af,\mu)+ \nabla_aL(x,\hat\af,\mu)\partial_\mu\hat\af\\
&  =\partial_\mu L(x,\hat\af,\mu,v) -\nabla_zF(x,z,\mu)\partial_\mu(\nabla_aL)(x,\hat\af,\mu,v).
\end{align*}
Thus
\begin{align*}
  |\nabla_xF(x,z,\mu)|&\le C(1+|\hat\af|^2+|z|(1+|\hat\af|))\le C(1+|z|^2),\\
|\partial_\mu F(x,z,\mu)|&\le C(1+|\hat\af|^2+|z|(1+|\hat\af|))\le C(1+|z|^2).
\end{align*}
Also
\begin{align*}
  D_{zz}^2F(x,z,\mu)&D_{aa}^2L(x,\hat\af,\mu)+
  \nabla_{z}F(x,z,\mu)\cdot D_{aaa}^ 3L(x,\hat\af,\mu) \nabla_z\hat\af=I,\\
D_{zz}^2F(x,z,\mu)&=G(x,\hat\af,\mu)-D_{aaa}^ 3L(x,\hat\af,\mu) [G(x,\hat\af,\mu)z, G(x,\hat\af,\mu),G(x,\hat\af,\mu)].\\
D_{xz}^2F(x,z,\mu)&D_{aa}^2L(x,\hat\af,\mu)+
  \nabla_{z}F(x,z,\mu)(D_{xaa}^ 3L(x,\hat\af,\mu)+ D_{aaa}^ 3L(x,\hat\af,\mu) D_x\hat\af=0,\\
  D_{xz}^2F(x,z,\mu)&=D^2_{xa}L(x,\hat\af,\mu)(G(x,\hat\af,\mu)
 -D_{zz}^2F(x,z,\mu))\\&-\nabla_zF(x,z,\mu)D_{xaa}^3L(x,\hat\af,\mu)G(x,\hat\af,\mu).\\
\partial_\mu\nabla_xF(x,z,\mu,v)&D_{aa}^2L(x,\hat\af,\mu)+
  \nabla_{z}F(x,z,\mu)\partial_\mu(D_{aa}^2L)(x,\hat\af,\mu,v)+ D_{aaa}^ 3L(x,\hat\af,\mu)\partial_\mu\hat\af=0,\\
  \partial_\mu\nabla_xF(x,z,\mu,v)&=\partial_\mu(\nabla_aL)(x,\hat\af,\mu,v)(G(x,\hat\af,\mu)
 -D_{zz}^2F(x,z,\mu))\\&- \nabla_{z}F(x,z,\mu)\partial_\mu(D_{aa}^2L) (x,\hat\af,\mu,v)G(x,\hat\af,\mu).
\end{align*}
Thus
\begin{equation}
\xi^t D_{zz}^2F(x,z,\mu)\xi=\xi^tG(x,\hat\af,\mu)\xi-D_{aaa}^ 3L(x,\hat\af,\mu) [G(x,\hat\af,\mu)z, G(x,\hat\af,\mu)\xi,G(x,\hat\af,\mu)\xi]\label{eq:F1}
\end{equation}
for $\xi\in\R^m$. On the other hand, since $C\left|\xi\right|^2\geq\xi^TD_{aa}^2l\ \xi\geq\gamma\left|\xi\right|^2$,
we have $C\geq\lambda_1,\ldots,\lambda_m\geq\gamma$, where $\lambda_1,\ldots,\lambda_m$
are the eigenvalues of $D_{aa}^2L$. It follows that the eigenvalues
of $G$ are bounded between $1/C$ and
$1/\gamma$. Therefore, under Assumptions \ref{assu:A1}, \ref{assu:A2} we have
\begin{align}\label{}
  |D_{zz}^2F(x,z,\mu)|&\le C.\\
  |D_{xz}^2F(x,z,\mu)|&\le C(1+|z|).\\
|\partial_\mu\nabla_xF(x,z,\mu)|&\le C(1+|z|).
\end{align}
These imply \eqref{hy22} .
By Assumptions \ref{assu:A1}, \ref{assu:A2} \and  \eqref{hy22}
\begin{align*}
  |\nabla_xF(x',z',\mu')-\nabla_xF(x,z,\mu)|\le|\nabla_xL(x',\hat{\alpha}(x',z',\mu'),\mu)- \nabla_xL(x,\hat\af(x,z,\mu),\mu)|\\
  +|\nabla_zF(x',z',\mu')-\nabla_zF(x,z,\mu)||D_{xa}^2L(x',\hat{\alpha}(x',z',\mu'),\mu')|\\
  +|\nabla_zF(x,z,\mu)||D_{xa}^2L(x',\hat{\alpha}(x',z',\mu'),\mu')-D_{xa}^2L(x,\hat\af(x,z,\mu),\mu)|\\
 \leq C(1+|z|+|z'|)\{ (1+|z|+|z'|(|x-x'|+W_2(\mu,\mu'))+|z-z'|\},
\end{align*}
and
\begin{multline*}
  |\partial_\mu F(x',z',\mu',v')- \partial_\mu F(x,z,\mu,v)|\\
  \le|\partial_\mu L(x',\hat{\alpha}(x',z',\mu'),\mu',v')-\partial_\mu L(x,\hat\af(x,z,\mu),\mu,v)|\\ 
  +|\nabla_zF(x',z',\mu')-\nabla_zF(x,z,\mu)||\partial_\mu(\nabla_aL)(x',\hat{\alpha}(x',z',\mu'),\mu',v')|\\
  +|\nabla_zF(x,z,\mu)||\partial_\mu(\nabla_aL)(x',\hat{\alpha}(x',z',\mu'),\mu',v')-\partial_\mu(\nabla_aL)(x,\hat\af(x,z,\mu),\mu,v)|\\
 \leq C(1+|z|+|z'|)\{ (1+|z|+|z'|(|x-x'|+W_2(\mu,\mu'))+|z-z'|\}.
\end{multline*}
\end{proof}

\section{Conclusions}
\label{sec:conclude}

In this paper, our primary contribution is Theorem \ref{thm:weak_MFG}, which proves the classical and Malliavin differentiability of solutions to the stochastic differential MFG system \eqref{eq:weak_mfg2} with quadratic-growth running cost. These regularity results are fundamental to understanding the system's physical meaning and ensuring numerical simulations are accurate. Although Theorem \ref{thm:weak_MFG} addresses the non-fully coupled system \eqref{eq:weak_mfg2}, it lays the groundwork for future analysis of regularity in the fully coupled case.

\subsection*{Author contributions} All authors contributed equally to this manuscript.

\section{Declarations}
\subsection*{Competing interests}
The authors have no competing interests to disclose.
\subsection*{Funding}
The authors acknowledge financial support from CONACYT Mexico.
\subsection*{Availability of data and materials}
Data sharing is not applicable to this article as no new data were created or analyzed in this study.

%\bibliographystyle{plain}
%\bibliography{bib}

\begin{thebibliography}{10}

\bibitem{ankirchner2007classical}
S.~Ankirchner, P.~Imkeller, and G.~Dos~Reis.
\newblock Classical and variational differentiability of {BSDEs} with quadratic
  growth.
\newblock {\em Electronic Journal of Probability}, 12:1418--1453, 2007.

\bibitem{ankirchner2010pricing}
S.~Ankirchner, P.~Imkeller, and G.~Dos~Reis.
\newblock Pricing and hedging of derivatives based on nontradable underlyings.
\newblock {\em Mathematical Finance: An International Journal of Mathematics,
  Statistics and Financial Economics}, 20(2):289--312, 2010.

\bibitem{buckdahn2017mean}
R.~Buckdahn, J.~Li, S.~Peng, and C.~Rainer.
\newblock Mean-field stochastic differential equations and associated {PDEs}.
\newblock {\em Annals of Probability}, 45:824--878, 2017.

\bibitem{cardaliaguet2010notes}
P.~Cardaliaguet.
\newblock Notes on mean field games: from {P.L. Lions} lectures at {C}ollege de
  {F}rance.
\newblock Lecture Notes given at Tor Vergata, 2010.

\bibitem{carmona2015forward}
R.~Carmona and F.~Delarue.
\newblock Forward--backward stochastic differential equations and controlled
  {McKean}--{Vlasov} dynamics.
\newblock {\em Annals of Probability}, 43:2647--2700, 2015.

\bibitem{carmona2018probabilistic}
R.~Carmona and F.~Delarue.
\newblock {\em Probabilistic theory of mean field games with applications},
  volume~84 of {\em Probability theory and Stochastic Modelling}.
\newblock Springer, 2018.

\bibitem{carmona2015probabilistic}
R.~Carmona and D.~Lacker.
\newblock A probabilistic weak formulation of mean field games and
  applications.
\newblock {\em The Annals of Applied Probability}, 25(3):1189--1231, 2015.

\bibitem{dos2010some}
G.~Dos~Reis.
\newblock {\em On some properties of solutions of quadratic growth {BSDE} and
  applications in finance and insurance}.
\newblock PhD thesis, Mathematisch-Naturwissenschaftlichen Fakult{\"a}t II
  Humboldt-Universit{\"a}t zu Berlin, 2010.

\bibitem{dos2011some}
G.~Dos~Reis.
\newblock {\em Some Advances on Quadratic BSDE: Theory, Numerics,
  Applications}.
\newblock LAP Lambert Academic Publishing, 2011.

\bibitem{fabbri2017stochastic}
G.~Fabbri, F.~Gozzi, and A.~Swiech.
\newblock {\em Stochastic optimal control in infinite dimension}, volume~82 of
  {\em Probability Theory and Stochastic Modelling}.
\newblock Springer, 2017.

\bibitem{imkeller2010path}
P.~Imkeller and G.~Dos~Reis.
\newblock Path regularity and explicit convergence rate for {BSDE} with
  truncated quadratic growth.
\newblock {\em Stochastic Processes and their Applications}, 120(3):348--379,
  2010.

\bibitem{dudley1984stochastic}
H.~Kunita.
\newblock Stochastic differential equations and stochastic flows of
  diffeomorphisms.
\newblock In {\em Ecole d'{\'e}t{\'e} de probabilit{\'e}s de Saint-Flour
  XII-1982}, pages 143--303. Springer, 1984.

\bibitem{nualart2006malliavin}
D.~Nualart.
\newblock {\em The {M}alliavin calculus and related topics}.
\newblock Probability and Its Applications. Springer, 2006.

\bibitem{nualart2018introduction}
D.~Nualart and E.~Nualart.
\newblock {\em Introduction to {M}alliavin calculus}, volume~9.
\newblock Cambridge University Press, 2018.

\end{thebibliography}

\end{document}